\newtheorem{thm}{Theorem}[section]
\theoremstyle{definition}
\theoremstyle{remark}
\newtheorem{remark}[thm]{Remark}
\newtheorem{theorem}{Theorem}
\newtheorem{lemma}{Lemma}
\newtheorem{proposition}{Proposition}
\newtheorem{definition}{Definition}
\newtheorem{corollary}{Corollary}
\newtheorem{example}{Example}
\newcommand{\RR}{\mathbb R}
\newcommand{\Iff}{if and only if}
\numberwithin{equation}{section}
\newcommand{\pc}[1]{{\color{magenta}{#1}}}
\begin{document}

\title[A note on phase (norm) retrievable
Real Hilbert (fusion) frames]{}
\author{F. Akrami}
\address{Department of sciences, University of Maragheh, Maragheh, Iran.}
\email{fateh.akrami@gmail.com}
\author{P. G. Casazza}
\address{Department of Mathematics, University of Missouri, Columbia, USA.}
\email{casazzap@missoouri.edu}
\author{M. A. Hasankhani Fard}
\address{Department of Mathematics, Vali-e-Asr University, Rafsanjan, Iran.}
\email{m.hasankhani@vru.ac.ir}
\author{A. Rahimi}
\address{Department of sciences, University of Maragheh, Maragheh, Iran.}
\email{rahimi@maragheh.ac.ir}

\dedicatory{}

\thanks{The second author was supported by NSF DMS 1609760}

\subjclass[2010]{42C15, 42C40.}

\keywords{Real Hilbert frames, Fusion frame, Finitely full spark, Full spark,
Phase retrieval, Norm retrieval.}

\begin{abstract}
In this manuscript, we present several new results in finite and countable dimensional real Hilbert space phase retrieval and norm retrieval by vectors and projections. We make a detailed study of when hyperplanes do norm retrieval.
Also, we show that the families of norm retrievable frames $\{f_{i}\}_{i=1}^{m}$ in $\mathbb{R}^n$ are not dense in the family of $m\leq (2n-2)$-element sets of
vectors in $\mathbb{R}^n$ for every finite $n$ and the families of vectors which do norm retrieval in $\ell^2$ are not dense in the infinite families of vectors in $\ell^2$. We also show that if a Riesz basis does norm retrieval in $\ell^2$, then it is an orthogonal sequence. We provide numerous examples to show that our results are best possible.
\end{abstract}

\maketitle

\section{Introduction}\label{s:intro}
The concept of frames in a separable Hilbert space was originally introduced by Duffin and Schaeffer in the context of non-harmonic Fourier series \cite{DS1952}.
Frames have the redundancy property that make them more applicable than bases. Phase retrieval and norm retrieval are one of the most applied and studied areas of research today. Phase retrieval for Hilbert
space frames was introduced in \cite{BCE2006} and quickly became an industry. Although much work has been done on the complex infinite dimensional case of phase retrieval, only a few papers exist on
infinite dimensional real phase retrieval or norm retrieval, e.g., \cite{BCCHT18}. In \cite{BCCHT18},  some concepts such as ``full spark" and ``finitely full spark" were introduced and were generalized. We
will present some examples for them.
\par
Fusion frames are an emerging topic of frame theory, with applications to communications and distributed
processing. Fusion frames were introduced by Casazza and Kutyniok in \cite{CK2004} and further developed in their joint paper \cite{CKL2008} with Li. The theory for fusion frames
is available in arbitrary separable Hilbert spaces (finite dimensional or not).
\par
We first give the background material needed for the paper. Let $\mathbb{H}$ be finite or infinite dimensional Real Hilbert space and B$(\mathbb{H})$ be the class of all bounded linear operators defined on $\mathbb{H}$. The natural numbers and real numbers are denoted by $``\mathbb{N}"$ and $``\mathbb{R}"$, respectively. We  use $[m]$ instead of the set $\{1,2,3,\dots,m \}$ and use $[\{f_i\}_{i \in I}]$ instead of $span\{f_i\}_{i \in I}$ where $I$ is a finite or countable subset of $\mathbb{N}$. We denote by $\mathbb{R}^n$ a $n$ dimensional real  Hilbert space.
 We start with the definition of a real Hilbert space frame.
\begin{definition} \label{D:frame}
 A family of vectors $\{f_i\}_{i\in I}$ in a finite or infinite dimensional separable Hilbert space $\mathbb {H}$ is a \textbf{frame} if there are constants $0<A \leq B< \infty $ so that
$$ A\|f\|^2 \leq \sum_{i\in I}{|\langle f,f_i \rangle|^2}\leq B\|f\|^2, \quad \mbox{for all} \quad f\in \mathbb{H}.$$
The constants $A$  and $B$ are called the lower and upper frame bounds for $\{f_i\}_{i\in I}$, respectively. If only an upper frame bound exists, then $\{f_i\}_{i\in I}$ is called a {\bf B-Bessel set} or simply
{\bf Bessel} when the constant is implicit. If $A=B$, it is called an {\bf A-tight frame} and
 in case $ A=B=1$, it called a {\bf Parseval~frame}.  The values $\{\langle f, f_i \rangle\}_{i=1}^{\infty}$ are called the frame coefficients of the vector $f \in \mathbb{H}$.
\end{definition}
We will need to work with Riesz sequences.
\begin{definition} \label{D:Riesz sequence}
A family $\Phi = \{\phi_i\}_{i \in I}$ in a finite or infinite dimensional Hilbert space $\mathbb{H}$
is a \textbf{Riesz sequence} if there are constants $0<A \leq B< \infty $ satisfying
$$A\sum_{i \in I}|c_i|^2 \leq \|\sum_{i\in I}{c_i \phi_i}\|^2\leq B\sum_{i \in I}|c_i|^2$$
 for all sequences of scalars $\{c_i\}_{i \in I}$.
If it is complete in $\mathbb{H}$, we call $\Phi$ a \textbf{Riesz basis}.
\end{definition}

 It is well known that every finite dimensional real Hilbert space  $\mathbb{H}$ \textrm is isomorphic to $\mathbb{R}^n$, for some n, and every infinite
dimensional real Hilbert space  $\mathbb{H}$ \textrm is  isomorphic to $\ell^2(\mathbb{R})$ (countable real sequences with $\ell^2$~-norm). We will use $\ell^2$ instead of $\ell^2(\mathbb{R})$ for simplicity.
Throughout the paper, $\{e_i\}_{i=1}^{\infty}$ will be used to denote the canonical basis for the real space $\ell^2$, i.e., a basis for which
$$\langle e_i,e_j \rangle=\delta_{i,j}=
\begin{cases}
1 \quad  if \ i=j,  \\
0 \quad  if \ i\ne j.
\end{cases}$$

\begin{definition}\label{D:phase(norm) retrieval by vectors}
\textrm A family of vectors $\{f_{i}\}_{i\in I}$ in a \pc{real} Hilbert space $\mathbb {H}$ does \textbf {phase (norm) retrieval} if whenever $ x, y \in \mathbb {H}$, satisfy
$$|\langle x,f_{i}\rangle |=|\langle y,f_{i}\rangle| \quad \mbox{ for all } i\in I,$$
then $x=\pm y \;( \|x\|=\|y\|)$. \\ [10pt]
\end{definition}
Note that if $\{f_i\}_{i\in I}$ does phase (norm) retrieval, then so does $\{a_if_i\}_{i\in I}$ for any $0< a_i< \infty$ for all $i\in I$.
But in the case where $|I|=\infty$, we have to be careful to maintain frame bounds. This always works if $0<\inf_{i\in I}a_i \le sup_{i\in I}a_i < \infty$.
But this is not necessary in general.

 The complement property is an essential issue here.
  Since in the finite dimensional setting frames are equivalent with spanning sets, first we give the complement property in the finite case from \cite{CC17}.
\begin{definition}\label{D:complement property in finite case}
\textrm A family of vectors $\{f_{k}\}_{k=1}^{m}$ in $\mathbb{R}^n $ \textrm has the \textbf{complement property}\, \textrm  if for any subset $I \subset [m]$,
\center $either\ \  span\{f_{k}\}_{k\in I}=\mathbb{R}^n$ \textrm \quad or\ \ \quad  $ span\{f_{k}\}_{k\in I^c}=\mathbb{R}^n $.\\ [10pt]
\end{definition}
This is generalized in \cite{CCD16}.
\begin{definition}\label{D:complement pro in infinite case}
\textrm A family of vectors $\{f_{k}\}_{k=1}^{\infty}$ in $\ell^2 $ \textrm has the \textbf{complement property}\, \textrm  if for any subset $I \subset \mathbb{N}$, \\
$$either\ \  \overline{span}\{f_{k}\}_{k\in I}=\ell ^2 \textrm \quad or \quad \ \  \overline{span}\{f_{k}\}_{k\in I^c}=\ell^2. $$
\end{definition}
The following result appeared in \cite{CCD16}.
\begin{theorem}\label{T:phase retrievality and complement property for infinite case}
 A family of vectors $\{f_{i}\}_{i=1}^{\infty} $  \textrm does phase retrieval for $\ell^2$ if and only if  it has the complement property.
\end{theorem}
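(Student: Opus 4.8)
The plan is to prove the two implications separately via the standard device of splitting the index set according to the sign relation between $\langle x,f_i\rangle$ and $\langle y,f_i\rangle$, which converts the absolute-value constraints into orthogonality against the two complementary closed spans.

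For the direction ``complement property $\Rightarrow$ phase retrieval'': assume $\{f_i\}_{i=1}^{\infty}$ has the complement property, and let $x,y\in\ell^2$ satisfy $|\langle x,f_i\rangle|=|\langle y,f_i\rangle|$ for all $i$. I would set $I=\{i\in\mathbb{N}:\langle x,f_i\rangle=\langle y,f_i\rangle\}$, so that (because the absolute values agree) $I^c=\{i:\langle x,f_i\rangle=-\langle y,f_i\rangle\}$; every index lies in one of the two sets, the ambiguous indices (where both inner products vanish) being placed in $I$. Then $\langle x-y,f_i\rangle=0$ for $i\in I$ and $\langle x+y,f_i\rangle=0$ for $i\in I^c$, hence $x-y\perp\overline{\text{span}}\{f_i\}_{i\in I}$ and $x+y\perp\overline{\text{span}}\{f_i\}_{i\in I^c}$. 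By the complement property one of these closed spans is all of $\ell^2$, so either $x-y=0$ or $x+y=0$; that is, $x=\pm y$.

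For the converse, assume $\{f_i\}_{i=1}^{\infty}$ does phase retrieval and argue by contradiction: if the complement property fails, fix $I\subset\mathbb{N}$ with $\overline{\text{span}}\{f_i\}_{i\in I}\ne\ell^2$ and $\overline{\text{span}}\{f_i\}_{i\in I^c}\ne\ell^2$. Using the projection theorem, choose nonzero $u\perp\overline{\text{span}}\{f_i\}_{i\in I}$ and nonzero $v\perp\overline{\text{span}}\{f_i\}_{i\in I^c}$, and put $x=u+v$, $y=u-v$. A direct check shows $|\langle x,f_i\rangle|=|\langle y,f_i\rangle|$ for all $i$: on $I^c$ both inner products equal $\langle u,f_i\rangle$, and on $I$ they equal $\pm\langle v,f_i\rangle$. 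Phase retrieval then forces $x=\pm y$, i.e. $v=0$ or $u=0$, contradicting the choice of $u,v$.

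As for obstacles, in finite dimensions this is essentially bookkeeping; the only genuinely infinite-dimensional ingredients are that a proper closed subspace of $\ell^2$ has nontrivial orthogonal complement and that orthogonality to each $f_i$ ($i\in I$) upgrades to orthogonality to $\overline{\text{span}}\{f_i\}_{i\in I}$ — both routine. The subtle point to get right is that the partition in the first direction must exhaust $\mathbb{N}$ (including the indices where the relevant inner products are zero), and one should note in passing that the complement property already entails completeness of $\{f_i\}_{i=1}^{\infty}$ (apply it with $I=\mathbb{N}$), so that phase retrieval is a meaningful requirement here; no frame bounds are needed for the equivalence itself.
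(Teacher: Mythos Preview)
The paper does not give its own proof of this theorem; it is quoted as a known result with a citation to \cite{CCD16}. Your argument is correct and is precisely the standard proof that appears in that reference: the sign-partition $I=\{i:\langle x,f_i\rangle=\langle y,f_i\rangle\}$ in one direction, and the construction $x=u+v$, $y=u-v$ from nonzero orthogonal witnesses $u,v$ in the other. The only infinite-dimensional ingredients you invoke --- that a proper closed subspace of $\ell^2$ has nonzero orthogonal complement, and that orthogonality to each $f_i$ passes to the closed span --- are exactly what is required, and your handling of the indices with vanishing inner products is fine. There is nothing to correct or compare.
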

The corresponding finite dimensional result first appeared in \cite{CCPW16}.
\begin{theorem}\label{T:phase retrievality and complement property for finite case}
A family of vectors $\{f_{i}\}_{i=1}^{m} $ in $\mathbb {R}^n$  \textrm does phase retrieval if and only if it has the complement property.
\end{theorem}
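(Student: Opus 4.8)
The plan is to reduce the scalar identity $|\langle x,f_i\rangle| = |\langle y,f_i\rangle|$ to a linear-algebraic statement about the two vectors $x-y$ and $x+y$. The key observation, which is exactly where the real scalar field is used, is that for each $i$ we have $|\langle x,f_i\rangle| = |\langle y,f_i\rangle|$ if and only if $\langle x,f_i\rangle = \pm\langle y,f_i\rangle$, i.e. if and only if $\langle x-y,f_i\rangle = 0$ or $\langle x+y,f_i\rangle = 0$. This turns the phase retrieval condition into a question about which of the $f_i$ annihilate $x-y$ and which annihilate $x+y$, which is precisely the kind of spanning/annihilator dichotomy captured by Definition \ref{D:complement property in finite case}.

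For the implication ``complement property $\Rightarrow$ phase retrieval'', suppose $|\langle x,f_i\rangle| = |\langle y,f_i\rangle|$ for all $i\in[m]$, and set $I = \{\, i\in[m] : \langle x-y,f_i\rangle = 0 \,\}$. By the observation above, $\langle x+y,f_i\rangle = 0$ for every $i\in I^c$, so $x-y \perp \mathrm{span}\{f_i\}_{i\in I}$ and $x+y \perp \mathrm{span}\{f_i\}_{i\in I^c}$. The complement property forces at least one of these two spans to be all of $\mathbb{R}^n$: in the first case $x-y=0$, in the second $x+y=0$, so $x=\pm y$. For the converse, if the complement property fails there is $I\subset[m]$ with $\mathrm{span}\{f_i\}_{i\in I}\subsetneq\mathbb{R}^n$ and $\mathrm{span}\{f_i\}_{i\in I^c}\subsetneq\mathbb{R}^n$; pick nonzero $u \perp \mathrm{span}\{f_i\}_{i\in I}$ and nonzero $v \perp \mathrm{span}\{f_i\}_{i\in I^c}$ and put $x=u+v$, $y=u-v$. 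A direct check gives $\langle x,f_i\rangle = -\langle y,f_i\rangle$ for $i\in I$ and $\langle x,f_i\rangle = \langle y,f_i\rangle$ for $i\in I^c$, hence $|\langle x,f_i\rangle| = |\langle y,f_i\rangle|$ for all $i$; but $x-y = 2v \neq 0$ and $x+y = 2u \neq 0$, so $x\neq\pm y$ and phase retrieval fails.

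I do not anticipate a serious obstacle: the argument is entirely the pairing of the pointwise dichotomy $\langle x\mp y,f_i\rangle = 0$ with the spanning dichotomy in the complement property, with no dimension count or compactness needed. The only points requiring a moment's care are that the index set $I$ in the first direction is unambiguously defined even when some $f_i$ annihilates both $x-y$ and $x+y$ (such an index may be placed in $I$ harmlessly), and that in the converse the vectors $u$ and $v$ can genuinely be taken nonzero, which is exactly the content of the two strict inclusions. The same proof carries over verbatim to the countable case of Theorem \ref{T:phase retrievality and complement property for infinite case} after replacing spans by their closures and orthogonal complements of subspaces by orthogonal complements of their closures.
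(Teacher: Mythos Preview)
Your argument is correct and is precisely the standard proof of this equivalence: the dichotomy $\langle x-y,f_i\rangle=0$ or $\langle x+y,f_i\rangle=0$ for real scalars is exactly what converts phase retrieval into the complement property, and your construction $x=u+v$, $y=u-v$ for the converse is the canonical one. The small caveats you flag (indices where both $x\pm y$ are annihilated; nonzero choice of $u,v$) are handled correctly.

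There is, however, nothing in the paper to compare against: the paper does not supply its own proof of this theorem but merely quotes it from \cite{CCPW16} (and the infinite-dimensional version from \cite{CCD16}). Your write-up is essentially the argument given in those references, so in that sense it matches the ``paper's'' proof by proxy.
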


We recall

\begin{definition}
A family of vectors $\{f_i\}_{i=1}^m$ in $\RR^n$ is {\bf full spark} if for every $I\subset [m]$ with $|I|=n$, the set $\{f_i\}_{i\in I}$
spans $\RR^n$.
\end{definition}
\begin{corollary}\label{T:phase retrievality and complement property and full apark for finite case}
\textrm If $\{f_{i}\}_{i=1}^{m} $  \textrm does phase retrieval in $\mathbb {R}^n$, then $m \geq2n-1$.
If $m\ge 2n-1$ and the frame is full spark, then it does phase retrieval.
If $m=2n-1$, $\{f_{i}\}_{i=1}^{m} $ does phase retrieval if and only if it is full spark.
\end{corollary}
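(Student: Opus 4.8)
The plan is to deduce all three assertions from the complement-property characterization of phase retrieval in Theorem~\ref{T:phase retrievality and complement property for finite case}, after which everything reduces to elementary counting on the sizes of subsets of $[m]$.

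For the lower bound $m\ge 2n-1$, I would argue the contrapositive. Suppose $m\le 2n-2$. Then one can split $[m]=I\cup I^c$ with $|I|\le n-1$ and $|I^c|\le n-1$ (for instance $I=\{1,\dots,\lfloor m/2\rfloor\}$). A set of at most $n-1$ vectors cannot span $\mathbb{R}^n$, so neither $\{f_k\}_{k\in I}$ nor $\{f_k\}_{k\in I^c}$ spans $\mathbb{R}^n$; hence the complement property fails, and by Theorem~\ref{T:phase retrievality and complement property for finite case} the family does not do phase retrieval. Contraposing gives the claim.

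For the sufficiency of full spark when $m\ge 2n-1$: fix an arbitrary $I\subseteq[m]$. Since $|I|+|I^c|=m\ge 2n-1$, at least one of the two, say $|I|$, satisfies $|I|\ge n$. Choose any $J\subseteq I$ with $|J|=n$; by the full spark hypothesis $\{f_k\}_{k\in J}$ spans $\mathbb{R}^n$, hence so does $\{f_k\}_{k\in I}$. Thus the complement property holds, and Theorem~\ref{T:phase retrievality and complement property for finite case} yields phase retrieval. The $m=2n-1$ equivalence then has one direction for free; for the converse assume $m=2n-1$ and that the family does phase retrieval, and suppose for contradiction that full spark fails, i.e.\ some $J\subseteq[m]$ with $|J|=n$ has $\{f_k\}_{k\in J}$ not spanning $\mathbb{R}^n$. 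Then $|J^c|=m-n=n-1$, so $\{f_k\}_{k\in J^c}$ also fails to span $\mathbb{R}^n$, contradicting the complement property (equivalently phase retrieval, via Theorem~\ref{T:phase retrievality and complement property for finite case}) for the partition $(J,J^c)$.

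I do not anticipate a genuine obstacle here; the only point requiring mild care is the boundary bookkeeping — namely that $m\ge 2n-1$ forces one side of every partition to have at least $n$ elements, and that at the extremal value $m=2n-1$ a spanning failure on an $n$-element set pins its complement to exactly $n-1$ elements, which is what makes the equivalence sharp. All of this is immediate once Theorem~\ref{T:phase retrievality and complement property for finite case} is in hand.
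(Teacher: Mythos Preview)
Your argument is correct in every part, and it is exactly the standard deduction from the complement-property characterization. The paper itself does not supply a proof of this corollary at all --- it is stated immediately after Theorem~\ref{T:phase retrievality and complement property for finite case} as a direct consequence and the text moves on --- so there is nothing to compare against beyond noting that your reasoning is precisely the intended (and well-known) justification.
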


For linearly independent sets there is a special case \cite{CCJW14}.
\begin{theorem}\label{TT}
If $\{f_i\}_{i=1}^n$ in $\RR^n$ does norm retrieval, then the set is orthogonal.
\end{theorem}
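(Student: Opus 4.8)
The plan is to show first that a norm retrievable family $\{f_i\}_{i=1}^n$ in $\RR^n$ is automatically a basis, and then to exploit sign flips in the dual expansion to force orthogonality. For the first point: if $\{f_i\}_{i=1}^n$ did not span $\RR^n$, pick a nonzero $x$ orthogonal to all of them; then $|\langle x,f_i\rangle|=0=|\langle 0,f_i\rangle|$ for every $i$, while $\|x\|\neq 0=\|0\|$, contradicting norm retrieval. Hence $\{f_i\}_{i=1}^n$ is linearly independent, so it is a basis of $\RR^n$.

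Let $\{g_i\}_{i=1}^n$ be the dual basis, i.e. $\langle g_i,f_j\rangle=\delta_{ij}$; then every $x\in\RR^n$ equals $x=\sum_{i=1}^n \langle x,f_i\rangle\,g_i$, and conversely every vector $\sum_i c_i g_i$ with $c\in\RR^n$ arbitrary arises in this way (this is exactly where linear independence is used). Now fix a sign vector $\eps\in\{-1,1\}^n$; given $x$ with $c_i:=\langle x,f_i\rangle$, set $y:=\sum_i \eps_i c_i g_i$. Then $\langle y,f_j\rangle=\eps_j c_j$, so $|\langle y,f_j\rangle|=|c_j|=|\langle x,f_j\rangle|$ for all $j$, and norm retrieval yields $\|x\|=\|y\|$. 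In other words,
$$\Big\|\sum_{i=1}^n c_i g_i\Big\|=\Big\|\sum_{i=1}^n \eps_i c_i g_i\Big\|\qquad\text{for all }c\in\RR^n,\ \eps\in\{-1,1\}^n.$$

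Finally I would specialize $\eps$ to flip only the $k$-th sign. Writing $u=\sum_{i\neq k}c_i g_i$, the displayed identity becomes $\|u+c_k g_k\|^2=\|u-c_k g_k\|^2$, which after expanding reduces to $4c_k\langle u,g_k\rangle=0$. Since $c_k\in\RR$ is arbitrary and $u$ ranges over $\mathrm{span}\{g_i:i\neq k\}$, this forces $\langle g_i,g_k\rangle=0$ for all $i\neq k$, so $\{g_i\}_{i=1}^n$ is an orthogonal basis. Then from $\langle g_i,f_j\rangle=\delta_{ij}$ and orthogonality of the $g_i$ one gets $f_i=g_i/\|g_i\|^2$, whence $\{f_i\}_{i=1}^n$ is orthogonal as well. (Equivalently, one can phrase the whole argument in terms of the analysis operator $T$ of the basis: norm retrieval says each $T^{-1}D_\eps T$ is orthogonal, which after a short computation makes $TT^{T}=(\langle f_i,f_j\rangle)_{i,j}$ commute with every diagonal sign matrix, hence diagonal.) I do not expect a genuine obstacle here; the one point to watch is that the coefficient vector $c$ really ranges over all of $\RR^n$ — precisely what being a basis provides — so that the single-flip specialization is legitimate for every admissible $u$ and $c_k$.
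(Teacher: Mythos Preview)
Your argument is correct. Note, though, that the paper does not actually prove Theorem~\ref{TT}; it is quoted from \cite{CCJW14}. The paper does, however, prove the $\ell^2$ analogue (Proposition~\ref{T:every Riesz bases that do norm retrieval are orthogonal sets}), and there the route differs from yours: rather than running sign flips through the dual expansion, the paper invokes the perp-space characterization of norm retrieval (Theorem~\ref{T:norm retrievablity and perp}). Concretely, assuming some $\phi_j$ is not orthogonal to $\mathrm{span}\{\phi_i\}_{i\neq j}$, one takes a unit vector $x$ orthogonal to all $\phi_i$ with $i\neq j$ (this is your $g_j$, normalized) and sets $y=x-\langle x,\phi_j\rangle\phi_j$, which is orthogonal to $\phi_j$; since norm retrieval forces $(\mathrm{span}\{\phi_i\}_{i\neq j})^\perp\perp(\mathrm{span}\{\phi_j\})^\perp$, one must have $\langle x,y\rangle=0$, but a direct computation gives $\langle x,y\rangle=1-|\langle x,\phi_j\rangle|^2\neq 0$. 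Your approach is more self-contained---it does not appeal to the characterization theorem---and it also explains the structure a bit more, since it shows the dual basis is orthogonal and hence $f_i=g_i/\|g_i\|^2$. The paper's approach is shorter once Theorem~\ref{T:norm retrievablity and perp} is available and transfers verbatim to Riesz bases in $\ell^2$. One small note on your write-up: the sentence ``Hence $\{f_i\}_{i=1}^n$ is linearly independent'' should read ``spanning''; linear independence then follows from the count $n$ of vectors in $\RR^n$, as you implicitly use in the next line.
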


It is clear that phase retrieval implies norm retrieval. The converse fails since an orthonormal basis does norm retrieval but fails phase
retrieval since it fails complement property.
Subsets of phase (norm) retrievable frames certainly may fail phase (norm) retrieval, since linearly independent subsets fail the complement property so
fail phase retrieval and
by Theorem \ref{TT} if every subset of a frame does norm retrieval then every two distinct vectors are orthogonal and so the frame is an orthogonal set plus possibly
more vectors.
 But
projections of these sets do still do phase (norm) retrieval.

\section{Phase (norm) retrievable Fusion frames}\label{s:phase(norm) retrievable Fusion frames}

In real life and some areas such as crystal twinning in X-ray crystallography \cite{D2010}, we need to project the signal onto higher than one
dimensional subspaces and it has to be recovered from the norms of these
projections. Throughout the paper, the term projection is used to describe an orthogonal projection onto subspaces. Norm retrieval is in fact the essential condition to pass phase retrievability of these
projections to the corresponding orthogonal complements \cite{CCJW14}.

Fusion frames can be regarded as a generalization of conventional frame theory.
It turns out that the fusion frame theory is in fact more delicate due to complicated
relationships between the structure of the sequence of weighted subspaces and the
local frames in the subspaces and due to sensitivity with respect to changes of the
weights. Fusion frames were introduced by Casazza and Kutyniok in \cite{CK2004}( under the name\textbf{ frames of subspaces}) and further developed in their joint paper \cite{CKL2008} with Li.
Here $\{W_{i}\}_{i \in I}$ is a family of closed subspaces of $\mathbb {H}$ and $\{v_{i}\}_{i \in I}$ is a family of positive weights. Also we denote by $P_{i}$ the orthogonal projection onto
$W_i$.
\begin{definition}\label{D: fusion frame}
A family $\{(W_{i},v_{i}) \}_{i \in I }$ with $W_i$ subspaces of $\mathbb{H}$, $v_i$ weights, and $P_i$ the
projection onto $W_i$, is a\textbf{ fusion frame} for $\mathbb{H}$  if  there exist constants
$A,B > 0$ such that
$$A\|f\|^2 \leq \sum_{i \in I}v_{i}^{2}\|P_i f\|^2 \leq B\|f\|^2, \mbox{ for all } f \in \mathbb{H}.$$ The constants $A$ and $B$ are called the
\textbf{fusion frames bounds}.  We also refer to the fusion frames as $\{P_i,v_i\}_{i\in I}$ or just $\{P_i\}_{i\in I}$ if the weights are all one.
\end{definition}
For more details on fusion frames, we recommend \cite{CK2004}. Improving and extending the notions of phase  and norm  retrievability, we present the definition of phase (norm) retrievable to fusion frames.

\begin{definition}\label{D:phase(norm) retrieval by vectors}
\textrm A family of projections $\{P_{i}\}_{i\in I}$ in a real Hilbert space $\mathbb {H}$ does \textbf {phase (norm) retrieval} if whenever $ x, y \in \mathbb {H}$, satisfy
$$\|P_ix\|=\|P_iy\| \quad \mbox{ for all } i\in I,$$
then $x=\pm y \;( \|x\|=\|y\|)$. \\ [10pt]
\end{definition}

\begin{definition}\label{D:phase(norm) retrievable fusion frame}
\textrm {A fusion frame} $\{(W_{i},v_{i}) \}_{i \in I }$ \, \textrm{is phase (norm) retrievable for} $\mathbb{H}$ \, \Iff \, the family of projections $\{P_{i} \}_{i\in I}$ is phase (norm) retrievable for
$\mathbb{H}$, where $P_{i}=P_{W_{i}}$ is the orthogonal projection onto $W_{i}$ \, $(i \in I)$.
\end{definition}

\begin{remark}
Note that $\{W_i,v_i\}_{i=1}^m$ does phase (norm) retrieval if and only if $\{W_i\}_{i=1}^m=:\{W_i,1\}_{i=1}^m$ does phase (norm) retrieval.
\end{remark}

It is well known that phase (norm) retrievable sets need not be frames (fusion frames). For example consider
phase (norm) retrievable set $\{e_i + e_j\}_{i<j}$ which does not satisfy the frame upper bound condition and therefore is not a (fusion) frame for $\ell^2$, but it
does phase retrieval.

Part of the importance of fusion frames is that it is both necessary and sufficient  to be able to string together frames for each of the subspaces $W_{k}$ (with uniformly bounded frame constants) to get a frame for $\mathbb{H}$ which is proved in  \cite{CCPW16}:
\begin{theorem} \label{main}
Let $\{W_i\}_{i\in I}$ be subspaces of $R^n$.  The following are equivalent:
\begin{enumerate}
\item $\{W_i\}_{i\in I}$ is phase retrievable.
\item For every orthonormal basis $\{f_{ij}\}_{j\in I_i}$ for $W_i$, the family $\{f_{ij}\}_{j\in I_i,i\in I}$ does phase retrieval.
\end{enumerate}
\end{theorem}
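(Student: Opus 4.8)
The plan is to establish both implications in Theorem~\ref{main} by connecting phase retrieval for the projections $\{P_{W_i}\}_{i\in I}$ with phase retrieval for the union of local orthonormal bases, using only the defining property that $\|P_{W_i}x\|^2 = \sum_{j\in I_i} |\langle x, f_{ij}\rangle|^2$ whenever $\{f_{ij}\}_{j\in I_i}$ is an orthonormal basis for $W_i$.

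For the direction $(2)\Rightarrow(1)$: suppose every choice of local orthonormal bases yields a phase retrievable vector system. Fix $x,y\in\RR^n$ with $\|P_{W_i}x\| = \|P_{W_i}y\|$ for all $i$. Pick any orthonormal bases $\{f_{ij}\}_{j\in I_i}$ of $W_i$; then $\sum_j |\langle x,f_{ij}\rangle|^2 = \sum_j |\langle y,f_{ij}\rangle|^2$ for each $i$, i.e.\ $\|P_{W_i}x\| = \|P_{W_i}y\|$. This is weaker than the coordinatewise equality $|\langle x,f_{ij}\rangle| = |\langle y,f_{ij}\rangle|$ that phase retrieval of $\{f_{ij}\}$ would let us feed in, so a direct quotation of hypothesis (2) does not immediately apply; the fix is to \emph{rotate} the local basis. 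I would argue that if $\|P_{W_i}x\|=\|P_{W_i}y\|$ then for each fixed $i$ one can choose an orthonormal basis of $W_i$ adapted to the pair $(P_{W_i}x, P_{W_i}y)$ — since these two vectors lie in $W_i$ and have equal norm, there is an orthonormal basis of $W_i$ (even of the two-dimensional span, extended arbitrarily) in which the coordinate vectors of $P_{W_i}x$ and $P_{W_i}y$ have the same absolute values componentwise (this is an elementary planar fact: two equal-length vectors in a plane, in a suitable orthonormal basis, read as $(a,b)$ and $(a,-b)$). With this choice, $|\langle x, f_{ij}\rangle| = |\langle P_{W_i}x, f_{ij}\rangle| = |\langle P_{W_i}y, f_{ij}\rangle| = |\langle y, f_{ij}\rangle|$ for all $i,j$, so hypothesis (2) forces $x = \pm y$.

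For the direction $(1)\Rightarrow(2)$: suppose $\{W_i\}_{i\in I}$ is phase retrievable and fix any orthonormal bases $\{f_{ij}\}_{j\in I_i}$ of the $W_i$. If $x,y$ satisfy $|\langle x,f_{ij}\rangle| = |\langle y,f_{ij}\rangle|$ for all $i,j$, then squaring and summing over $j\in I_i$ gives $\|P_{W_i}x\|^2 = \sum_j |\langle x,f_{ij}\rangle|^2 = \sum_j |\langle y,f_{ij}\rangle|^2 = \|P_{W_i}y\|^2$ for every $i$, so by (1) we get $x=\pm y$. This direction is essentially immediate.

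The main obstacle is the basis-rotation step in $(2)\Rightarrow(1)$: one must be careful that hypothesis (2) is required to hold for \emph{every} choice of orthonormal bases, so it is legitimate to pick a different adapted basis for each $i$ simultaneously, and one must handle the case $\dim W_i = 1$ (where the claim is trivial) and $\dim W_i \ge 2$ uniformly — the planar reduction handles the latter since only the $2$-plane spanned by $P_{W_i}x$ and $P_{W_i}y$ matters and the rest of the orthonormal basis can be completed arbitrarily. I would also note that this argument is finite-dimensional precisely because it uses that each $W_i$ admits an orthonormal basis and that $\sum_j |\langle x, f_{ij}\rangle|^2 = \|P_{W_i}x\|^2$ holds unconditionally; no frame bounds on the union are needed for the equivalence itself, only the interpretation as a frame for $\RR^n$ when the $\{W_i\}$ form a fusion frame.
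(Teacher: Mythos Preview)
The paper does not supply its own proof of this theorem; it is quoted from \cite{CCPW16} and then immediately strengthened in the following theorem (Theorem~\ref{T:phase(norm) retrievable Fusion frames}), whose proof covers only the direction $(1)\Rightarrow(2)$. Your argument for $(1)\Rightarrow(2)$ is exactly that proof: equality of the moduli $|\langle x,f_{ij}\rangle|=|\langle y,f_{ij}\rangle|$ for a fixed $i$, together with the fact that an orthonormal basis does norm retrieval on its span, gives $\|P_{W_i}x\|=\|P_{W_i}y\|$, and then phase retrievability of the projections yields $x=\pm y$. So on that half there is nothing to distinguish.

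For $(2)\Rightarrow(1)$ there is no proof in the paper to compare against, so let me simply assess yours. The basis-rotation idea is correct and is the natural elementary route: the planar fact that two equal-norm vectors $u,v$ in $\RR^2$ read as $(a,b)$ and $(a,-b)$ in the orthonormal basis along the internal/external bisectors is exactly what you need, and extending that $2$-frame to a full orthonormal basis of $W_i$ handles the remaining coordinates (both projections vanish there). The crucial point, which the paper itself stresses in the sentence following the theorem, is that hypothesis (2) is assumed for \emph{every} choice of local orthonormal bases, so you are free to pick the adapted basis depending on the pair $(x,y)$. Your treatment of the degenerate cases ($\dim W_i=1$, or $P_{W_i}x$ and $P_{W_i}y$ linearly dependent) is adequate since equal norms then force $P_{W_i}y=\pm P_{W_i}x$. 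The argument is complete; the only cosmetic suggestion is to state the bisector lemma as a one-line claim before invoking it, rather than leaving it as a parenthetical.
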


We note that (2) of the theorem must hold for {\bf every} orthonormal basis for the subspaces.
For example, let $\{\phi_i\}_{i=1}^3$ and $\{\psi_i\}_{i=1}^3$ be orthonormal bases for $R^3$ so that $\{\phi_i\}_{i=1}^3\cup \{\psi_i\}_{i=1}^3$
is full spark. Let
\[ W_1=[\phi_1]\ \ W_2=[\phi_2]\  \ W_3=[\phi_3]\  \ W_4=[\psi_1,\psi_2].\]
Then $\{W_i\}_{i=1}^4$ is a fusion frame for $R^3$ and $\{\phi_1,\phi_2,\phi_3,\psi_1,\psi_2\}$ is full spark and so does phase retrieval
for $R^3$. But it is known that 4 subspaces of $R^3$ cannot do phase retrieval \cite{CCPW16}.

The corresponding result for norm retrieval does not make sense, because every orthonormal basis for a subspace does norm retrieval.

We can strengthen this theorem.
\begin{theorem}\label{T:phase(norm) retrievable Fusion frames}
Let $\{(W_{k},v_{k}) \}_{k \in I }$ be a phase (norm) retrievable fusion frame for $\mathbb{H}$  and $\{f_{ij} \}_{j \in {I_i}}$ be a norm retrievable frame for $W_{i}$ for $i\in I$. Then $\{v_{i}f_{ij} \}_{j \in {I_i}, i \in I}$ is a phase (norm) retrievable frame for $\mathbb {H}$.
\end{theorem}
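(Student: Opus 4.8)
The plan is to reduce the fusion-frame statement to the vector statement about phase/norm retrieval by passing between the projections $P_k$ (onto $W_k$) and the local frames $\{f_{ij}\}_{j\in I_i}$ for $W_k$. The key observation is the identity
$$
\sum_{j\in I_i} |\langle x, v_i f_{ij}\rangle|^2 = v_i^2 \sum_{j\in I_i} |\langle P_i x, f_{ij}\rangle|^2,
$$
valid since $\langle x, f_{ij}\rangle = \langle P_i x, f_{ij}\rangle$ for $f_{ij}\in W_i$. First I would establish that $\{v_i f_{ij}\}$ is indeed a frame for $\mathbb{H}$: the lower bound follows by stringing together the fusion-frame lower bound for $\{(W_k,v_k)\}$ with the (uniform) lower frame bounds of the local frames, and the upper bound similarly from the fusion-frame upper bound and the local upper bounds; one must be careful in the infinite-dimensional case that the local frame bounds are uniformly bounded above and below, which I would assume is part of the hypothesis (as in the fusion-frame literature quoted before Theorem \ref{main}).

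Next comes the retrieval statement. Suppose $x,y\in\mathbb{H}$ satisfy $|\langle x, v_i f_{ij}\rangle| = |\langle y, v_i f_{ij}\rangle|$ for all $i\in I$, $j\in I_i$. By the displayed identity and $v_i>0$, for each fixed $i$ we get $|\langle P_i x, f_{ij}\rangle| = |\langle P_i y, f_{ij}\rangle|$ for all $j\in I_i$. Since $\{f_{ij}\}_{j\in I_i}$ does norm retrieval for $W_i$ and $P_i x, P_i y\in W_i$, we conclude $\|P_i x\| = \|P_i y\|$, hence $\|v_i f_{ij}\|$-measurements give $\|P_i x\| = \|P_i y\|$ for every $i\in I$. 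Now invoke the hypothesis that $\{(W_k,v_k)\}_{k\in I}$ is phase (norm) retrievable as a fusion frame, i.e. that the family of projections $\{P_i\}$ does phase (norm) retrieval: from $\|P_i x\| = \|P_i y\|$ for all $i$ we conclude $x = \pm y$ in the phase case, and $\|x\| = \|y\|$ in the norm case. That is exactly the assertion for $\{v_i f_{ij}\}$.

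The main obstacle I anticipate is bookkeeping in the infinite-dimensional setting rather than anything conceptual: one must make sure the concatenated system $\{v_i f_{ij}\}$ actually satisfies two-sided frame bounds (so the hypotheses on the local frames should include uniform bounds, and one should note this explicitly), and one must check that the interchange of sums implicit in the frame inequality for the concatenated family is legitimate — this is standard (everything is nonnegative, so Tonelli applies) but worth a sentence. Everything else is the routine two-step reduction "vector measurements on $v_i f_{ij}$" $\Rightarrow$ "$\|P_i x\| = \|P_i y\|$ via local norm retrieval" $\Rightarrow$ "$x = \pm y$ or $\|x\| = \|y\|$ via fusion-frame phase/norm retrieval," handled in the two cases in parallel since the argument is identical up to the final conclusion.
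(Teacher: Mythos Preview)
Your proof is correct and follows essentially the same two-step reduction as the paper: use $\langle x,f_{ij}\rangle=\langle P_ix,f_{ij}\rangle$ together with local norm retrieval to obtain $\|P_ix\|=\|P_iy\|$ for each $i$, then invoke phase (norm) retrievability of the fusion frame to conclude. The paper's proof is terser---it writes out only the phase case and omits any verification of the frame bounds for $\{v_if_{ij}\}$---whereas you additionally (and reasonably) flag the need for uniform local frame bounds in the infinite-dimensional setting; but the core argument is the same.
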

\begin{proof}
 Let $f,g \in \mathbb {H}$, For any  $j \in I_{i},i \in I$, we have
\begin{align}
|\langle f, v_{i}f_{ij}\rangle|=|\langle g, v_{i}f_{ij}\rangle| \: \notag
&\Rightarrow \;  |\langle f, v_{i}P_{i}f_{ij}\rangle|=|\langle g, v_{i}P_{i}f_{ij}\rangle|,\:\\ \notag
&\Rightarrow \;  v_i|\langle P_{i} f, f_{ij}\rangle|=v_i|\langle P_{i}g, f_{ij}\rangle|,\:  \\ \notag
\mbox{since } \{f_{ij} \}\: \textrm{do norm retrieval} \:
&\Rightarrow \: \|P_{i}f\|=\|P_{i}g\|, \: \forall {i \in I},  \\ \notag
\mbox{since} \:\{P_{i}\}_{i \in I}\: \textrm {do phase retrieval} \:
&\Rightarrow \: f=\pm g.
\end{align}
\end{proof}
The following theorem shows that the unitary operators preserve  phase (norm) retrievability of fusion frames.
\begin{theorem}\label{T:unitary operator and norm retrievability}
Let $\{(W_i,v_i ) \}_{i \in I}$ be a phase (norm) retrievable fusion frame for $\mathbb{H}$. If $T \in B(\mathbb{H})$
is a unitary operator, then
$\{(TW_i,v_i ) \}_{i \in I}$ is also a phase (norm) retrievable fusion frame.
\end{theorem}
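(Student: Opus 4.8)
The plan is to reduce everything to the single identity $P_{TW_i}=TP_iT^{*}$ for the orthogonal projection onto $TW_i$, which holds precisely because $T$ is unitary. First I would record this fact. Since $T$ is unitary, $T^{*}=T^{-1}$ is a homeomorphism carrying $W_i$ onto $TW_i$, so each $TW_i$ is a closed subspace; moreover, for any $f\in\mathbb{H}$ the vector $TP_iT^{*}f$ lies in $TW_i$, while $f-TP_iT^{*}f=T\bigl(T^{*}f-P_iT^{*}f\bigr)$ is orthogonal to $TW_i$ because $T$ preserves inner products and $T^{*}f-P_iT^{*}f\perp W_i$. Hence $TP_iT^{*}$ is the orthogonal projection onto $TW_i$, i.e. $P_{TW_i}=TP_iT^{*}$.

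Next I would verify that $\{(TW_i,v_i)\}_{i\in I}$ is again a fusion frame with the \emph{same} bounds $A,B$. For $f\in\mathbb{H}$, using the identity above and the fact that $T$ is norm preserving,
$$\sum_{i\in I} v_i^{2}\|P_{TW_i}f\|^{2}=\sum_{i\in I} v_i^{2}\|TP_iT^{*}f\|^{2}=\sum_{i\in I} v_i^{2}\|P_i(T^{*}f)\|^{2},$$
and the last expression lies between $A\|T^{*}f\|^{2}=A\|f\|^{2}$ and $B\|f\|^{2}$, so the fusion frame inequalities are inherited.

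Finally, for the retrieval property, suppose $x,y\in\mathbb{H}$ satisfy $\|P_{TW_i}x\|=\|P_{TW_i}y\|$ for all $i\in I$. By the projection identity and unitarity, $\|P_{TW_i}x\|=\|TP_iT^{*}x\|=\|P_i(T^{*}x)\|$, and likewise for $y$, so $\|P_i(T^{*}x)\|=\|P_i(T^{*}y)\|$ for all $i\in I$. Since $\{(W_i,v_i)\}_{i\in I}$ is phase (norm) retrievable, $\{P_i\}_{i\in I}$ does phase (norm) retrieval, whence $T^{*}x=\pm T^{*}y$ (respectively $\|T^{*}x\|=\|T^{*}y\|$). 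Applying the invertible operator $T$ (respectively using that $T^{*}$ is an isometry) yields $x=\pm y$ (respectively $\|x\|=\|y\|$), which is exactly phase (norm) retrievability of $\{P_{TW_i}\}_{i\in I}$, i.e. of $\{(TW_i,v_i)\}_{i\in I}$.

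The only substantive point is the conjugation identity $P_{TW_i}=TP_iT^{*}$; once that is in hand, the rest is a two-line computation exploiting that unitaries preserve norms and inner products, so I do not expect a genuine obstacle. The things to be careful about are (i) conjugating the projections by $T$ rather than merely composing on one side, and (ii) checking that the argument runs verbatim in the parenthetical "norm" case, which it does since $T^{*}$ is isometric.
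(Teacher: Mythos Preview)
Your argument is correct and follows essentially the same approach as the paper: both rest on the conjugation identity $P_{TW_i}=TP_iT^{*}$ and then use that $T$ and $T^{*}$ are isometries to transfer the phase (norm) retrieval conclusion from $\{P_i\}$ to $\{P_{TW_i}\}$. If anything, your write-up is more complete, since you justify the projection identity and verify that the fusion frame bounds are preserved, whereas the paper simply asserts $Q_i=TP_iT^{*}$ and proceeds directly to the retrieval step.
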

\begin{proof}
Let $\{P_i\}_{i\in I}$ be the projections onto $\{W_i\}_{i\in I}$. The projections onto $TW_i$ are $\{Q_i=TP_iT^*\}_{i\in I}$.
Assume $f,g\in \mathbb{H}$ and
\[ \|Q_if\|=\|Q_ig\|, \mbox{ for all }i\in I.\]
Then,
\[ \|TP_iT^*f\|=\|P_iT^*f\|=\|TP_iT^*g\|=\|P_iT^*g\|,\mbox{ for all }i\in I.\]
If $\{P_i\}_{i\in I}$ does norm retrieval, then $\|T^*f\|=\|T^*g\|$, and so $\|f\|=\|g\|$. If $\{P_i\}_{i\in I}$ does phase retrieval, then
$T^*f=\pm T^*g$, and so $f=\pm g$.
  \end{proof}

 \begin{theorem}\label{T:union of norm retrievable Fusion frames}
Let $\{(W_{i},v_{i}) \}_{i \in I}$ be a norm retrievable fusion
frame for a Hilbert space $\mathbb{H}$, with projections $\{P_i\}_{i\in I}$.  Let $\{Q_i\}_{i\in I}$ be projections from
$W_i$ to $W_i$, and let $W_i'=Q_iW_i$ and $W_i''=(I-Q_i)W_i$ for all $i\in I$. Then
$\{(W_{i}',v_{i}) \}_{i \in {I}}\bigcup \{( W_{i}'',v_{i}) \}_{i \in {I}}$  is a
 norm retrievable fusion frame for $\mathbb{H}$.
 \end{theorem}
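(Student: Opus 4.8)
The plan is to exploit the orthogonal decomposition $W_i=W_i'\oplus W_i''$ inside each $W_i$ and then transfer all norm information through the given norm retrievable fusion frame $\{(W_i,v_i)\}_{i\in I}$. First I would pin down the projections involved. For each $i\in I$, since $Q_i$ is an (orthogonal) projection of $W_i$ onto $W_i$, the subspaces $W_i'=Q_iW_i$ and $W_i''=(I-Q_i)W_i$ are mutually orthogonal and satisfy $W_i=W_i'\oplus W_i''$. Extending $Q_i$ to $\mathbb{H}$ by $0$ on $W_i^\perp$ (equivalently, replacing $Q_i$ by $Q_iP_i$), one checks that $Q_iP_i$ is the orthogonal projection of $\mathbb{H}$ onto $W_i'$ and that $P_i-Q_iP_i=(I-Q_i)P_i$ is the orthogonal projection of $\mathbb{H}$ onto $W_i''$. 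Writing $P_i'$ and $P_i''$ for these two projections, we get $P_i=P_i'+P_i''$ on all of $\mathbb{H}$ with $P_i'P_i''=0$, hence by the Pythagorean identity
\[
\|P_if\|^2=\|P_i'f\|^2+\|P_i''f\|^2\qquad(f\in\mathbb{H}).
\]

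Next I would record the fusion frame inequality: summing the last identity against the weights $v_i^2$ gives $\sum_{i\in I}v_i^2\big(\|P_i'f\|^2+\|P_i''f\|^2\big)=\sum_{i\in I}v_i^2\|P_if\|^2$, which lies between $A\|f\|^2$ and $B\|f\|^2$ by hypothesis. Thus the combined family $\{(W_i',v_i)\}_{i\in I}\cup\{(W_i'',v_i)\}_{i\in I}$ is indeed a fusion frame for $\mathbb{H}$, with the same bounds $A,B$.

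Finally, for norm retrieval, suppose $f,g\in\mathbb{H}$ satisfy $\|P_i'f\|=\|P_i'g\|$ and $\|P_i''f\|=\|P_i''g\|$ for all $i\in I$. Squaring and adding, and using the displayed Pythagorean identity for both $f$ and $g$, we obtain $\|P_if\|=\|P_ig\|$ for every $i\in I$; since $\{(W_i,v_i)\}_{i\in I}$ does norm retrieval, $\|f\|=\|g\|$, which is exactly what is required.

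I do not expect a serious obstacle here; the only point that needs care is the identification in the first step, namely that $P_i=P_i'+P_i''$ holds as operators on all of $\mathbb{H}$ rather than merely on $W_i$, which is what legitimizes the orthogonal splitting of $\|P_if\|^2$ and is immediate once one observes $W_i'\perp W_i''$ and $W_i'\oplus W_i''=W_i$. I would also point out, as a remark, that the analogue of the theorem with ``phase retrievable'' in place of ``norm retrievable'' fails, and the reason is visible in this argument: knowing $\|P_i'f\|=\|P_i'g\|$ and $\|P_i''f\|=\|P_i''g\|$ does not force $P_if=\pm P_ig$, since $f$ and $g$ could agree on $W_i'$ yet differ by a sign on $W_i''$.
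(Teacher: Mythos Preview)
Your proof is correct and follows essentially the same approach as the paper: use the Pythagorean identity $\|P_if\|^2=\|P_i'f\|^2+\|P_i''f\|^2$ to recover $\|P_if\|=\|P_ig\|$ from the hypotheses, then invoke norm retrieval of the original fusion frame. Your version is in fact more careful than the paper's (you make explicit that $P_i'=Q_iP_i$ and $P_i''=(I-Q_i)P_i$ are the projections on $\mathbb{H}$, and you also verify the fusion frame bounds, which the paper omits).
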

\begin{proof}
 If $\|Q_{i} x\|=\|Q_{i} y\|$ and $\|(I-Q_i) x\|=\|(I-Q_i)y\|$, for all $i\in I$, then
   \begin{align}
  \|P_ix\|^2&=\|Q_ix\|^2+\|(I-Q_i)x\|^2  \\ \notag
  &=\|Q_{i}y\|^2 + \|(I-Q_{i})y\|^2  \\ \notag
  &=\|P_{i}y\|^2  \end{align}
Since $\{P_i\}_{i\in I}$ does norm retrieval, we have $\|x\|=\|y\|$.
\end{proof}

The characterization of norm retrievable families of vectors first appeared in \cite{MAHF2015}.

\begin{theorem}\label{T:norm retrievablity and perp}
 A family of vectors $\{f_{k}\}_{k=1}^{\infty} $ \textrm does norm retrieval for $\mathbb {H}$ if and only if
 for any subset $I \subset \mathbb{N},$
 $$  (\overline{span}\{f_{k}\}_{k\in I})^ {\perp} \,  \bot \, ({\overline{span}\{f_{k}\}_{k\in I^c})^ \bot}. $$
\end{theorem}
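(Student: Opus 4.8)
The plan is to reduce norm retrieval to a statement about the pair $(u,v)=(x+y,\,x-y)$ by a polarization-type identity, and then to translate the resulting orthogonality condition into the stated subspace condition. Throughout write $S_I:=\overline{span}\{f_k\}_{k\in I}$.

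First I would record the two elementary identities that hold because $\mathbb{H}$ is a \emph{real} Hilbert space (so $|\langle x,f\rangle|^2=\langle x,f\rangle^2$): for all $x,y\in\mathbb{H}$ and any vector $f$,
\[
|\langle x,f\rangle|^2-|\langle y,f\rangle|^2=\langle x+y,f\rangle\,\langle x-y,f\rangle ,\qquad
\|x\|^2-\|y\|^2=\langle x+y,\,x-y\rangle .
\]
Setting $u=x+y$, $v=x-y$ is a bijective change of variables on $\mathbb{H}\times\mathbb{H}$ (inverse $x=\tfrac12(u+v)$, $y=\tfrac12(u-v)$). The first identity shows that the hypothesis $|\langle x,f_k\rangle|=|\langle y,f_k\rangle|$ for all $k$ is equivalent to $\langle u,f_k\rangle\langle v,f_k\rangle=0$ for all $k$, and the second shows that the conclusion $\|x\|=\|y\|$ is equivalent to $\langle u,v\rangle=0$. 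Hence $\{f_k\}_{k=1}^{\infty}$ does norm retrieval if and only if: whenever $u,v\in\mathbb{H}$ satisfy $\langle u,f_k\rangle\langle v,f_k\rangle=0$ for every $k$, then $\langle u,v\rangle=0$.

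Next I would identify the vanishing condition with the subspaces $S_I^\perp$. Given $u,v$ with $\langle u,f_k\rangle\langle v,f_k\rangle=0$ for all $k$, put $I=\{k:\langle v,f_k\rangle\neq 0\}$; then $\langle u,f_k\rangle=0$ for $k\in I$ and $\langle v,f_k\rangle=0$ for $k\in I^c$, i.e. $u\in S_I^\perp$ and $v\in S_{I^c}^\perp$. Conversely, for any $I\subset\mathbb{N}$ and any $u\in S_I^\perp$, $v\in S_{I^c}^\perp$ one has $\langle u,f_k\rangle\langle v,f_k\rangle=0$ for every $k$. Combining this with the previous paragraph, norm retrieval is equivalent to: for every $I\subset\mathbb{N}$, every $u\in S_I^\perp$ is orthogonal to every $v\in S_{I^c}^\perp$, which is precisely $S_I^\perp\perp S_{I^c}^\perp$.

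I do not expect a real obstacle here; the only points needing care are (a) invoking that $\mathbb{H}$ is real so that $|\langle x,f\rangle|^2=\langle x,f\rangle^2$ and the factorization in the first identity is valid (this is exactly where the complex theory behaves differently), and (b) checking that the two implications above are genuinely reversible, i.e. that the substitution $x=\tfrac12(u+v)$, $y=\tfrac12(u-v)$ realizes an arbitrary pair in $S_I^\perp\times S_{I^c}^\perp$. I would also note that nothing in the argument uses that $I$ is finite or infinite, nor that $\{f_k\}$ is a frame, so the statement — and proof — apply verbatim to finite families in $\mathbb{R}^n$ as well.
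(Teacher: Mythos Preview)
Your proof is correct. The paper itself does not supply a proof of this theorem---it is simply quoted from \cite{MAHF2015}---so there is no direct comparison to make; however, the polarization device $u=x+y$, $v=x-y$ that you use is exactly the argument the paper deploys in the very next result (the one-sided fusion-frame analogue, Theorem~\ref{T: Fusion frames property 1}), so your approach is entirely in line with the paper's methods and is almost certainly the argument in the cited reference as well.
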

One direction of this implication holds for fusion frames.

\begin{theorem}  \label{T: Fusion frames property 1}
Let$\{W_i,v_i\}_{i\in I}$ be a fusion frame in $R^n$. If $\{W_i,v_i\}_{i\in I}$ does norm retrieval, then whenever $J\subset I$,
and $x\perp W_j$ for all $j\in J$ and $y\perp W_j$ for all $j\in J^c$, then $x\perp y$.
\end{theorem}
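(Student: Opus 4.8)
The plan is to run the standard polarization argument that underlies norm retrieval. Fix $J\subset I$ and vectors $x,y$ with $x\perp W_j$ for all $j\in J$ and $y\perp W_j$ for all $j\in J^c$; the goal is to conclude $\langle x,y\rangle=0$. The key objects to introduce are the two test vectors $x+y$ and $x-y$, and the strategy is to show that the projection family $\{P_i\}_{i\in I}$ cannot distinguish these two vectors in norm, and then feed this into the norm‑retrieval hypothesis.

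First I would split the index set. For $j\in J$ we have $P_jx=0$, so $P_j(x+y)=P_jy$ and $P_j(x-y)=-P_jy$; hence $\|P_j(x+y)\|=\|P_j(x-y)\|$. For $j\in J^c$ we have $P_jy=0$, so $P_j(x+y)=P_jx=P_j(x-y)$, and again $\|P_j(x+y)\|=\|P_j(x-y)\|$. Therefore $\|P_i(x+y)\|=\|P_i(x-y)\|$ for every $i\in I$. Since $\{(W_i,v_i)\}_{i\in I}$ does norm retrieval — and the weights play no role in this property — we get $\|x+y\|=\|x-y\|$.

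Finally I would expand: from $\|x+y\|^2=\|x\|^2+2\langle x,y\rangle+\|y\|^2$ and $\|x-y\|^2=\|x\|^2-2\langle x,y\rangle+\|y\|^2$, equality of the two norms forces $4\langle x,y\rangle=0$, i.e.\ $x\perp y$.

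There is no serious obstacle in this direction; it is the fusion‑frame counterpart of one implication in Theorem \ref{T:norm retrievablity and perp}, and it uses only the defining property of the projection family $\{P_i\}_{i\in I}$, not the fusion‑frame inequalities or any dimension count. The genuinely delicate (and, for higher‑dimensional subspaces, false) statement would be the converse — recovering norm retrieval from perpendicularity of the complements $W_i^{\perp}$ — which is exactly why the theorem is phrased as a one‑way implication; but establishing that is not required here.
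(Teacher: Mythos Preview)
Your argument is correct and is essentially identical to the paper's: introduce $x\pm y$, use $P_jx=0$ for $j\in J$ and $P_jy=0$ for $j\in J^c$ to get $\|P_i(x+y)\|=\|P_i(x-y)\|$ for all $i$, apply norm retrieval, and expand the norms to conclude $\langle x,y\rangle=0$. The added commentary about the converse failing (and about the weights being irrelevant) is also in line with the paper's surrounding discussion.
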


\begin{proof}
By our assumption,
\[ \|P_i(x+y)\|=\|P_iy\|=\|P_i(x-y)\|,\mbox{ for all }i\in J,\]
and
\[\|P_i(x+y)\|=\|P_ix\|=\|P_i(x-y)\|,\mbox{ for all }i\in J^c.\]
Since the fusion frame does norm retrieval,
\[ \|x+y\|^2=\|x\|^2+\|y\|^2+2\langle x,y\rangle=\|x-y\|^2=\|x\|^2+\|y\|^2-2\langle x,y\rangle.\]
So $\langle x,y\rangle=0$ and $x\perp y$.
\end{proof}

In contrast to the vector case, the converse of the above theorem fails in general.
\vskip12pt
\begin{example}
In $R^3$ let $\{e_i\}_{i=1}^3$ be the canonical basis and let $W_1=[e_2,e_3]$ and $W_2=[e_1,e_3]$.  If $x\perp W_1$ then
$x=ae_1$ and if $y\perp W_2$ then $y=be_2$,and so $x\perp y$. The other possibility is $x\perp (W_1\cup W_2)$, but in this
case $x=0$ so $x\perp y$ for all $y$.

Let
\[ x=2e_1+2e_2+e_3\mbox{ and }y=e_1+e_2+2e_3.\]
Then
\[ \|P_1x\|^2=2^2+1^2=5\mbox{ and }\|P_1y\|^2=1^2+2^2=5.\]
And
\[ \|P_2x\|^2=2^2+1^2=5\mbox { and } \|P_2y\|^2=1^2+2^2=5.\]
But,
\[ \|x\|^2=9\mbox{ and }\|y\|^2=6.\]
So this fusion frame fails norm retrieval.
\end{example}
\begin{theorem}\label{T: Fusion frames property 2}
If $\{P_i,v_i\}_{i=1}^m$ is an A-tight fusion frame, then it does norm retrieval.
\end{theorem}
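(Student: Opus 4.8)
The plan is to use the defining inequality of an $A$-tight fusion frame directly, reading it as an exact Parseval-type identity. If $\{P_i,v_i\}_{i=1}^m$ is $A$-tight, then for every $f\in\mathbb{H}$ we have
\[
\sum_{i=1}^m v_i^2\|P_if\|^2 = A\|f\|^2.
\]
Now suppose $x,y\in\mathbb{H}$ satisfy $\|P_ix\|=\|P_iy\|$ for all $i\in[m]$. Then
\[
A\|x\|^2 = \sum_{i=1}^m v_i^2\|P_ix\|^2 = \sum_{i=1}^m v_i^2\|P_iy\|^2 = A\|y\|^2,
\]
and since $A>0$ this forces $\|x\|=\|y\|$, which is exactly norm retrieval. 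So the proof is essentially a one-line consequence of the tightness hypothesis.

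The key steps, in order, are: (1) write down the $A$-tightness identity as an equality rather than a two-sided inequality — this is the only place any real content is used, and it is immediate from Definition \ref{D: fusion frame} with $A=B$; (2) substitute $x$ and then $y$ into that identity; (3) equate the two sums using the hypothesis $\|P_ix\|=\|P_iy\|$ term by term; (4) cancel the nonzero constant $A$ to conclude $\|x\|=\|y\|$.

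Honestly, there is no real obstacle here — the statement is a direct corollary of the definition, and the only thing to be careful about is to invoke the equality version of the frame inequality (valid precisely because $A=B$ for a tight fusion frame) rather than trying to squeeze the conclusion out of the inequalities alone. One might also remark, as a sanity check, that this recovers the classical fact that a tight frame of vectors does norm retrieval (take each $W_i$ one-dimensional), and that the phrasing "A-tight" in the hypothesis should be read as "$A$-tight for some $A>0$", since any positive tight constant suffices.
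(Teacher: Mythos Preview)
Your proof is correct and follows exactly the same one-line argument as the paper's own proof: apply the $A$-tight identity to $x$ and to $y$, use $\|P_ix\|=\|P_iy\|$ termwise, and cancel $A$. (Your use of $v_i^2$ is in fact more faithful to Definition~\ref{D: fusion frame} than the paper's displayed line, which writes $v_i$.)
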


\begin{proof}
If $\|P_ix\|=\|P_iy\|$ for all $i=1,2,\ldots,m$, then
\[ A\|x\|^2=\sum_{i=1}^mv_i\|P_ix\|^2=\sum_{i=1}^mv_i\|P_iy\|^2=A\|y\|^2.\]
So we have norm retrieval.
\end{proof}

\begin{theorem}\label{T13}
Let $\{e_i\}_{i=1}^n$ be the canonical orthonormal basis for $R^n$. Let $\{I_i\}_{i=1}^m$ be subsets of $[n]$. Let
\[
W_i=span\{e_j\}_{j\in I_i},\mbox{ for all }i=1,2,\ldots,m.\]
Assume there exists a natural number K and $\epsilon_i=\pm 1$ so that
\[ \sum_{i=1}^m\epsilon_iI_i=K(1,1,\ldots,1)\in R^n.\]
Then $\{W_i,v_i\}_{i=1}^m$ does norm retrieval for all $0<v_i<\infty$.
\end{theorem}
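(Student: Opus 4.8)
The plan is to reduce everything to a statement about indicator vectors and coordinate projections. First, by the Remark recorded above, the fusion frame $\{(W_i,v_i)\}_{i=1}^m$ is norm retrievable if and only if the family of projections $\{P_i\}_{i=1}^m$ is, so the weights play no role and I may ignore them. Write each $x\in\RR^n$ as $x=\sum_{j=1}^n x_je_j$. Since $W_i$ is the coordinate subspace spanned by $\{e_j\}_{j\in I_i}$, the orthogonal projection is $P_ix=\sum_{j\in I_i}x_je_j$, whence
\[ \|P_ix\|^2=\sum_{j\in I_i}x_j^2 . \]
I also identify each subset $I_i\subset[n]$ with its indicator vector in $\RR^n$ (the $j$-th entry is $1$ if $j\in I_i$ and $0$ otherwise); this is the natural reading of the hypothesis $\sum_{i=1}^m\epsilon_iI_i=K(1,1,\ldots,1)$, which coordinatewise says $\sum_{i:\,j\in I_i}\epsilon_i=K$ for every $j\in[n]$.

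Next I take two vectors witnessing the hypothesis of norm retrieval. Suppose $x,y\in\RR^n$ satisfy $\|P_ix\|=\|P_iy\|$ for all $i=1,\ldots,m$, and set $z_j:=x_j^2-y_j^2$ for $j\in[n]$. By the displayed formula, the conditions $\|P_ix\|^2=\|P_iy\|^2$ become the linear system
\[ \sum_{j\in I_i} z_j=0,\qquad i=1,2,\ldots,m. \]

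Finally I form the $\epsilon$-weighted combination of these $m$ equations and interchange the order of summation, using the hypothesis:
\[ 0=\sum_{i=1}^m\epsilon_i\sum_{j\in I_i}z_j=\sum_{j=1}^n z_j\Big(\sum_{i:\,j\in I_i}\epsilon_i\Big)=K\sum_{j=1}^n z_j . \]
Since $K$ is a natural number we have $K\ne 0$, hence $\sum_{j=1}^n z_j=0$, i.e. $\sum_j x_j^2=\sum_j y_j^2$, which is exactly $\|x\|=\|y\|$. Therefore $\{P_i\}_{i=1}^m$, and hence $\{(W_i,v_i)\}_{i=1}^m$, does norm retrieval.

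I do not expect a genuine obstacle here; the only points requiring care are the correct interpretation of the displayed hypothesis as an identity of indicator vectors and the appeal to the earlier Remark that weights are irrelevant to norm retrievability. It is also worth remarking (though not needed for the argument) that $K\ge 1$ forces $\bigcup_{i=1}^m I_i=[n]$, so $\{(W_i,v_i)\}_{i=1}^m$ is indeed a fusion frame for $\RR^n$.
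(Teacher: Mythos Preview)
Your proof is correct and follows essentially the same route as the paper: both arguments compute the $\epsilon$-weighted combination $\sum_{i=1}^m\epsilon_i\|P_ix\|^2$, interchange the order of summation, and use the indicator-vector hypothesis to obtain $K\|x\|^2$. Your version is in fact a bit more careful—you spell out the interpretation of $\sum_i\epsilon_iI_i$ as an identity of indicator vectors, invoke the Remark that the weights $v_i$ are irrelevant, and note that $K\ge 1$ forces $\bigcup_i I_i=[n]$—whereas the paper's displayed computation contains a small typo (the leftmost sum is missing its $\epsilon_i$).
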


\begin{proof}
Let $\{P_i\}_{i\in I}$ be the projections onto $\{W_i\}_{i\in I}$. For $x\in R^n$ we have
\[ \sum_{i=1}^m\|P_ix\|^2=\sum_{i=1}^m\epsilon_i\sum_{j\in I_i}a_j^2=\sum_{j=1}^n\sum\{\epsilon_ia_j^2:j\in I_i\}=\sum_{j=1}^nKa_j^2=K\|x\|^2.
\]
It follows that the fusion frame does norm retrieval.
\end{proof}

The converse to the above theorem fails. 
Let $\{e_i\}_{i=1}^4$ be an orthonormal basis for $\RR^4$ and let 
\[ W_1=[e_1,e_4],\ W_2=[e_2,e_4],\ W_3=[e_3,e_4],\ W_4=[e_4].\]
This clearly does norm retrieval. Also, 
\[I_1=\{1,4\},\ I_2=\{2,4\},\ I_3=\{3,4\},\ I_4=\{4\}.\]
If $I\subset [4]$, $\epsilon_i=\pm 1$ for $i\in I$, and $\sum_{i\in I}\epsilon_iI_i=K(1,1,1,1)$, then $K=1$ and $\epsilon_i=1$ for $i=1,2,3$.
Since $\sum_{i=1}^3I_i=(1,1,1,3)$, this set fails the assumption in the theorem.

\section{Hyperplanes}\label{s: Hyperplanes}

The following appeared in \cite{CGJT17}.
\begin{theorem}\label{T7}
If $\{W_i\}_{i=1}^m$ are hyperplanes doing norm retrieval in $R^n$ and $\{W_i^{\perp}\}_{i=1}^m$ are linearly independent, then $m\ge n$.
\end{theorem}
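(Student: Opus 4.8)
The plan is to translate the norm-retrieval hypothesis into a statement about the rank-one projections onto the lines $W_i^{\perp}$, and then run a dimension count. Since each $W_i$ is a hyperplane, writing $u_i$ for a unit vector spanning $W_i^{\perp}$ we have $P_{W_i} = I - u_i u_i^{*}$, so for every $x\in\RR^n$,
\[
\|P_{W_i}x\|^2 = \|x\|^2 - \langle x,u_i\rangle^2 .
\]
Thus $\{W_i\}_{i=1}^m$ does norm retrieval precisely when the equalities $\langle x,u_i\rangle^2 = \langle y,u_i\rangle^2$ for all $i$ force $\|x\|=\|y\|$; equivalently, whenever $x,y$ satisfy $\langle x,u_i\rangle^2=\langle y,u_i\rangle^2$ for all $i$ we must have $\sum$-type control on $\|x\|^2-\|y\|^2$. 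The first key step is to reformulate this: norm retrieval by the hyperplanes is equivalent to the statement that the only vector $z$ of the form $z = \|x\|^2-\|y\|^2$ arising this way is forced to vanish — more usefully, by Theorem~\ref{T:norm retrievablity and perp}, for every $I\subset[m]$ the space $(\overline{\mathrm{span}}\{P_{W_i}^{\perp}\text{-data}\})$ condition holds; but in the hyperplane case it is cleaner to argue directly via the quadratic forms $x\mapsto\langle x,u_i\rangle^2$.

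The second key step is the linear-algebraic heart. Consider the map $\Phi:\RR^n\to\RR^m$, $\Phi(x) = (\langle x,u_1\rangle^2,\dots,\langle x,u_m\rangle^2)$, or better, the linear map $\Psi$ from the space $\mathrm{Sym}(\RR^n)$ of symmetric $n\times n$ matrices to $\RR^m$ sending $A\mapsto(\langle A u_1,u_1\rangle,\dots,\langle A u_m,u_m\rangle)$. Norm retrieval says: if $A = xx^{*}-yy^{*}$ lies in $\ker\Psi$ then $\mathrm{tr}(A)=0$, i.e. the trace functional annihilates $\ker\Psi$ intersected with the set of differences of rank-one squares. I would first show that the linear independence of $\{u_i\}_{i=1}^m$ (= linear independence of $\{W_i^{\perp}\}$) implies the projections $\{u_iu_i^{*}\}_{i=1}^m$ are linearly independent in $\mathrm{Sym}(\RR^n)$, hence $m\le\binom{n+1}{2}$ automatically, but that bound is far too weak; the point is rather to exhibit, when $m<n$, an explicit pair $x,y$ violating norm retrieval. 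Concretely, when $m<n$ choose a unit vector $w$ orthogonal to $\mathrm{span}\{u_1,\dots,u_m\}$ (possible since $m<n$ means that span has dimension $\le m<n$) together with any unit vector $v\in\mathrm{span}\{u_1,\dots,u_m\}^{\perp}$... wait — here I instead take $x=w$ and $y=0$: then $\langle x,u_i\rangle=0=\langle y,u_i\rangle$ for all $i$, so $\|P_{W_i}x\|=\|P_{W_i}y\|$ for all $i$, yet $\|x\|=1\ne 0=\|y\|$. This already shows $m<n$ is impossible, giving $m\ge n$.

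The only subtlety — and the step I expect to be the main obstacle — is making sure the hypothesis is being used correctly: the linear independence of $\{W_i^{\perp}\}$ is exactly what guarantees $\dim\mathrm{span}\{u_1,\dots,u_m\}=m$, so that $m<n$ genuinely forces the orthogonal complement of that span to be nonzero and hence supplies the witness $w$. (Without the independence hypothesis the $u_i$ could span all of $\RR^n$ with $m\ge n$ but still, e.g., $m=n$ and they form a non-orthogonal basis, which is the content of why Theorem~\ref{TT} is nontrivial; and conversely with $m<n$ but $\{u_i\}$ spanning a space of dimension $<m$, norm retrieval might still fail for the same reason, so the conclusion $m\ge n$ would be false — indeed even $m=1$ with $u_1$ repeated is "$m\ge n$" false.) So I would state the argument as: let $V=\mathrm{span}\{W_i^{\perp}\}_{i=1}^m$; linear independence gives $\dim V = m$; if $m<n$ then $V\neq\RR^n$, pick $0\ne w\perp V$, note $\|P_{W_i}w\| = \|w\| = \|P_{W_i}0\|+\|w\|$... rather $\|P_{W_i}w\|=\|w\|$ and $\|P_{W_i}0\|=0$, and since also trivially $w\ne 0=\pm 0$ in norm, $\|w\|\ne\|0\|$, contradicting norm retrieval; hence $m\ge n$. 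I would double-check that $P_{W_i}w=w$ when $w\perp W_i^{\perp}$, which holds because $w\in(W_i^{\perp})^{\perp}=W_i$, completing the proof.
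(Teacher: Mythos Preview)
Your argument has a genuine gap. The reformulation ``$\{W_i\}$ does norm retrieval precisely when $\langle x,u_i\rangle^2=\langle y,u_i\rangle^2$ for all $i$ forces $\|x\|=\|y\|$'' is incorrect. From $\|P_{W_i}x\|^2=\|x\|^2-\langle x,u_i\rangle^2$ one gets that $\|P_{W_i}x\|=\|P_{W_i}y\|$ is equivalent to
\[
\|x\|^2-\|y\|^2=\langle x,u_i\rangle^2-\langle y,u_i\rangle^2,
\]
which is \emph{not} the same as $\langle x,u_i\rangle^2=\langle y,u_i\rangle^2$. Your proposed witness pair $x=w$, $y=0$ (with $w\perp\mathrm{span}\{u_i\}$, $\|w\|=1$) therefore does \emph{not} violate norm retrieval: indeed $\|P_{W_i}w\|=\|w\|=1$ while $\|P_{W_i}0\|=0$, so the hypothesis $\|P_{W_i}x\|=\|P_{W_i}y\|$ is never satisfied, and there is nothing to contradict. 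You notice this yourself (``rather $\|P_{W_i}w\|=\|w\|$ and $\|P_{W_i}0\|=0$'') but then still claim a contradiction; there is none.

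The vector $w$ is the right ingredient, but it must be combined with something in $V=\mathrm{span}\{u_i\}$. One clean route is via Theorem~\ref{T5}: choose $v\in V$ with $\langle v,u_i\rangle\neq 0$ for all $i$ (a generic choice works) and set $x=v+w$. Then $P_{W_i}x=x-\langle v,u_i\rangle u_i$, and since the $u_i$ are linearly independent and $x\notin V$, any relation $x=\sum_i a_iP_{W_i}x$ forces $\sum_i a_i=1$ and $a_i\langle v,u_i\rangle=0$ for all $i$, which is impossible. Hence $x\notin\mathrm{span}\{P_{W_i}x\}$ and norm retrieval fails. Alternatively, one can exhibit an explicit pair: with $w\perp V$ and appropriate $v\in V$ one can arrange $\langle x,u_i\rangle^2-\langle y,u_i\rangle^2$ to be a nonzero constant equal to $\|x\|^2-\|y\|^2$; for instance in the model case $u_i=e_i$, $i=1,\dots,m$, take $x=e_1+\cdots+e_m$ and $y=\sqrt{m-1}\,e_{m+1}$.

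Note also that the paper does not supply its own proof of this theorem; it is quoted from \cite{CGJT17}, so there is no in-paper argument to compare yours against.
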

The following appeared in \cite{CC17}.

\begin{theorem}\label{T5}
Let $\{W_i\}_{i=1}^m$ be subspaces of $R^n$ with projections $\{P_i\}_{i=1}^m$.  The following are equivalent:
\begin{enumerate}
\item $\{W_i\}_{i=1}^m$ does norm retrieval.
\item For every $0\not= x\in R^n$, $x\in span\{P_ix\}_{i=1}^m$.
\end{enumerate}
\end{theorem}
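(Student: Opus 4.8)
The statement to prove is Theorem~\ref{T5}: for subspaces $\{W_i\}_{i=1}^m$ of $\RR^n$ with projections $\{P_i\}_{i=1}^m$, norm retrieval is equivalent to the condition that every nonzero $x\in\RR^n$ lies in $\mathrm{span}\{P_ix\}_{i=1}^m$.

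\textbf{Plan of proof.} The plan is to prove the contrapositive of each direction by exhibiting, from a failure of one condition, a failure of the other. For the direction $(2)\Rightarrow(1)$, suppose norm retrieval fails, so there exist $x,y\in\RR^n$ with $\|P_ix\|=\|P_iy\|$ for all $i$ but $\|x\|\neq\|y\|$; in particular $x\neq\pm y$ and (WLOG) $x,y\neq 0$. The key identity I would use is the polarization/parallelogram observation that $\|P_i(x+y)\|^2+\|P_i(x-y)\|^2=2\|P_ix\|^2+2\|P_iy\|^2$ together with the expansion $\|P_i(x\pm y)\|^2=\|P_ix\|^2\pm 2\langle P_ix,P_iy\rangle+\|P_iy\|^2$, which gives $\langle P_ix,P_iy\rangle=\tfrac14(\|P_i(x+y)\|^2-\|P_i(x-y)\|^2)$. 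The cleaner route: set $u=x+y$, $v=x-y$. Then $\|P_iu\|^2-\|P_iv\|^2=4\langle P_ix,P_iy\rangle$, and also $\langle P_iu,v\rangle=\langle P_iu,P_iv\rangle$... actually the slick argument is: since $\|P_ix\|=\|P_iy\|$, we get $\langle P_i(x+y),x-y\rangle=\langle P_ix,x\rangle-\langle P_ix,y\rangle+\langle P_iy,x\rangle-\langle P_iy,y\rangle=\|P_ix\|^2-\|P_iy\|^2+\langle P_iy,x\rangle-\langle P_ix,y\rangle=0$ because $\langle P_iy,x\rangle=\langle P_iy,P_ix\rangle=\langle P_ix,y\rangle$. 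Hence $P_i(x+y)\perp(x-y)$ for every $i$, so every vector in $\mathrm{span}\{P_i(x+y)\}_{i=1}^m$ is orthogonal to $x-y$. Since $x-y\neq 0$, this forces $x+y\notin\mathrm{span}\{P_i(x+y)\}_{i=1}^m$ (as $\langle x+y,x-y\rangle=\|x\|^2-\|y\|^2\neq 0$), so (2) fails with the witness vector $x+y$ (which is nonzero, since $\|x\|\neq\|y\|$ rules out $x=-y$).

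For the direction $(1)\Rightarrow(2)$, suppose (2) fails: there is $0\neq z\in\RR^n$ with $z\notin\mathrm{span}\{P_iz\}_{i=1}^m$. Let $V=\mathrm{span}\{P_iz\}_{i=1}^m$ and decompose $z=z_0+w$ where $z_0\in V$ and $0\neq w\in V^\perp$. Set $x=z$ and $y=z_0-w=z-2w$. Then $x\neq\pm y$: if $x=y$ then $w=0$; if $x=-y$ then $z_0=0$, but then $z=w\in V^\perp$ and also $P_iz\in V$ for each $i$, forcing $\langle P_iz,z\rangle=0$, i.e. $\|P_iz\|^2=0$ for all $i$, which (assuming the subspaces are not all trivial — note a fusion frame requires $\sum v_i^2\|P_iz\|^2\geq A\|z\|^2>0$) is impossible; alternatively just note $z_0=0$ makes $z=w$, and I would handle this by observing $\langle P_iz,z_0-w\rangle=\langle P_iz,z_0\rangle-\langle P_iz,w\rangle=\langle P_iz,z_0\rangle$ since $P_iz\in V\perp w$, and $\langle P_iz,z_0\rangle=\langle P_iz,z\rangle=\|P_iz\|^2$; also $\langle P_iz,z\rangle=\|P_iz\|^2$, so $\langle P_iz,x\rangle=\langle P_iz,y\rangle$, and more relevantly $\|P_ix\|^2=\langle P_iz,z\rangle=\|P_iz\|^2$ while $\|P_iy\|^2=\langle P_i(z_0-w),z_0-w\rangle=\langle P_iz_0,z_0\rangle=\langle P_i(z-w),z-w\rangle=\langle P_iz,z\rangle-2\langle P_iz,w\rangle+\langle P_iw,w\rangle=\|P_iz\|^2$ using $P_iw\in V\perp w$ and $P_iz\perp w$. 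Hence $\|P_ix\|=\|P_iy\|$ for all $i$ but, provided $x\neq\pm y$, norm retrieval fails; $\|x\|^2-\|y\|^2=4\langle z,w\rangle-4\|w\|^2=4\|w\|^2-4\|w\|^2=0$, so actually $\|x\|=\|y\|$ — so I must instead arrange a genuine norm discrepancy or use that $x\neq\pm y$ already kills phase/norm retrieval is too weak; so the right choice is to perturb: take $y'=z_0+cw$ for a scalar $c$ with $|c|\neq 1$; then $\|P_iy'\|^2=\|P_iz_0\|^2=\|P_iz\|^2-\text{(terms involving }w)$... since $P_iz\perp w$ and $P_iw\perp w$, we get $\|P_i y'\|=\|P_iz_0\|=\|P_ix\|$ independent of $c$, while $\|y'\|^2=\|z_0\|^2+c^2\|w\|^2\neq\|z_0\|^2+\|w\|^2=\|x\|^2$ for $c^2\neq 1$. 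Then $x,y'$ witness the failure of norm retrieval.

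\textbf{Main obstacle.} The technical heart is verifying the orthogonality/invariance relations $\langle P_iz,w\rangle=0$ and $\langle P_iw,w\rangle=0$ for $w\in(\mathrm{span}\{P_iz\}_i)^\perp$ — these follow from self-adjointness of $P_i$ and the fact that $P_iz$ and $P_iw=P_i^2w$ lie in $\mathrm{range}(P_i)$, but one must be careful that $P_iw$ need not lie in $V$ (only $P_iz$ does), so the identity $\langle P_iw,w\rangle=\langle w,P_iw\rangle=\|P_iw\|^2\geq 0$ is an equality, not obviously zero. I would instead route through $\|P_iy'\|^2=\langle P_i(z_0+cw),z_0+cw\rangle=\|P_iz_0\|^2+2c\langle P_iz_0,w\rangle+c^2\|P_iw\|^2$ and argue $\langle P_iz_0,w\rangle=\langle z_0,P_iw\rangle$ together with $\langle P_iz,w\rangle=\langle z,P_iw\rangle=0$... this still involves $\|P_iw\|^2$. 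The cleanest fix, and the one I would ultimately adopt, is to replace $w$ by a vector orthogonal to $\mathrm{span}\{P_iz\}_i + \mathrm{span}\{P_iw\}_i$ via a fixed-point/dimension count, or simply to observe that Theorem~\ref{T:norm retrievablity and perp} and the equivalence with the perp condition already packages the subtlety; so I expect the actual proof to lean on reformulating (2) as: $x\notin\mathrm{span}\{P_ix\}$ iff there is $0\neq w\perp P_ix$ for all $i$ with $\langle x,w\rangle\neq 0$, and then the pair $x+tw$ for small $t$ (keeping $\|P_i(x+tw)\|$ controlled is the snag since $P_iw\neq 0$ in general) — so honestly the delicate point is producing a genuine norm gap while freezing all the projected norms, and I anticipate spending the bulk of the argument there.
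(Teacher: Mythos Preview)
The paper does not prove Theorem~\ref{T5}; it is quoted as a result from \cite{CC17}, so there is no in-paper argument to compare against. I will therefore assess your attempt on its own merits.

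Your contrapositive argument for $(2)\Rightarrow(1)$ is correct and clean: from $\|P_ix\|=\|P_iy\|$ for all $i$ with $\|x\|\neq\|y\|$, the computation $\langle P_i(x+y),x-y\rangle=\|P_ix\|^2-\|P_iy\|^2=0$ shows $x-y$ is orthogonal to $\mathrm{span}\{P_i(x+y)\}_{i=1}^m$, while $\langle x+y,x-y\rangle=\|x\|^2-\|y\|^2\neq 0$, so $x+y$ is a nonzero witness to the failure of (2).

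Your argument for $(1)\Rightarrow(2)$, however, does not close. You correctly flag the obstacle: for $w\in V^\perp$ with $V=\mathrm{span}\{P_iz\}_i$, there is no reason for $P_iw$ to lie in $V$, so neither your first choice $y=z_0-w$ nor the perturbation $y'=z_0+cw$ forces $\|P_iy\|=\|P_iz\|$ (indeed $\|P_iz\|^2-\|P_i(z-2w)\|^2=-4\|P_iw\|^2$, which need not vanish). You end the proposal acknowledging this snag is unresolved.

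The fix is to run your own $(2)\Rightarrow(1)$ argument in reverse. Given $0\neq z\notin V$, let $w$ be the (nonzero) component of $z$ in $V^\perp$, so $\langle P_iz,w\rangle=0$ for every $i$ and $\langle z,w\rangle=\|w\|^2>0$. Set $x=z+w$ and $y=z-w$. Then
\[
\|P_ix\|^2-\|P_iy\|^2=4\langle P_iz,P_iw\rangle=4\langle P_iz,w\rangle=0,
\]
while $\|x\|^2-\|y\|^2=4\langle z,w\rangle=4\|w\|^2\neq 0$, so norm retrieval fails. The point you missed is that the quantity $\|P_iw\|^2$ that was obstructing your attempts appears \emph{symmetrically} in $\|P_i(z+w)\|^2$ and $\|P_i(z-w)\|^2$ and therefore cancels; you do not need $P_iw\in V$ or $P_iw=0$ at all.
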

The following example appears in \cite{CGJT17}. We will give a new proof which generalizes to answer another problem.
\begin{example}\label{Ex1}
If $\{W_i\}_{i=1}^n$ are hyperplanes doing norm retrieval, this does not imply that $\{W_i^{\perp}\}_{i=1}^n$ are independent.
Let $\{e_i\}_{i=1}^3$ be a orthonormal basis of $R^3$ and let $\phi_1=e_1$, $\phi_2=e_2$, and $\phi_3=(e_1-e_2)/\sqrt{2}$.
The vectors are not independent but
Note that
\[ \phi_1^{\perp}=W_1=[e_2,e_3],\ \ \phi_2^{\perp}=W_2=[e_1,e_3],\ \ \phi_3^{\perp}=W_3=[(e_1+e_2)/\sqrt{2},e_3].\]
The $\{W_i^{\perp}\}_{i=1}^3$ are hyperplanes doing norm retrieval.
To see this let $\{P_i\}_{i=1}^3$ be the projections onto
$\{W_i\}_{i=1}^3$ and we will check Theorem \ref{T5}. Then if $x=(a,b,c)\in R^3$,
\begin{equation}\label{E1}
P_1x=(0,b,c)
\end{equation}
\begin{equation}\label{E2}
P_2x=(a,0,c)
\end{equation}
\begin{equation}\label{E3}
P_3x=\left ( \frac{a+b}{2},\frac{a+b}{2},c\right )
\end{equation}
Then,
\[ (3.2)-(3.1)=(a,-b,0), \: 2(3.3)=(a+b,a+b,2c).
\]
Also,
\[ 2(3.3)-(3.1)-(3.2)=(b,a,0).
\]
We leave it to the reader to check the special cases where some of $a,b,c$ are zero. For example, if $a=0\not=b,c$ then
$P_2x=(0,0,c)$ and so $e_3\in span \{P_ix\}_{i=1}^3$ and now by (3.2), $e_2\in span \{P_ix\}_{i=1}^3$ and hence
$x\in span\{e_i\}_{i=2}^3=span\{P_ix\}_{i=1}^3$. So we assume $a,b,c\not= 0$.
Since $(a,-b,0)\perp (b,a,0)$ and both are in $span\{P_ix\}_{i=1}^3$, it follows that $e_1,e_2\in span\{P_ix\}_{i=1}^3$.
Combining this with (3.3) puts $e_3\in span\{P_ix\}_{i=1}^3$. So $x\in R^3=span\{P_ix\}_{i=1}^3$.
\end{example}

\begin{example}
Theorem \ref{T7} may fail if the $\{W_i^{\perp}\}_{i=1}^n$ are not linearly independent.  Let $\{e_i\}_{i=1}^4$ be an orthonormal
basis for $\mathbb{R}^4$  and define hyperplanes
\[ W_1=[e_2,e_3,e_4],\ \ W_2= [e_1,e_3,e_4],\ \ W_3=[\frac{e_1+e_2}{\sqrt{2}},e_3,e_4],\]
and let $\{P_i\}_{i=1}^3$ be the corresponding projections.
Then
\[ W_1^{\perp}=[e_1],\ \ W_2^{\perp}=[e_2],\ \ W_3^{\perp}=[\frac{e_1-e_2}{\sqrt{2}}],\]
and $\{W_i^{\perp}\}_{i=1}^3$ are not independent. We will show that $\{W_i\}_{i=1}^3$ does norm retrieval in $R^4$. We mimic Example \ref{Ex1}.
Let $x=(a,b,c,d)$ and we check Theorem \ref{T5}.
Again we leave it to the reader to check the simple cases where one or more of the $a,b,c,d$ are zero. Using exactly the same
argument as Example \ref{Ex1}, we discover that $e_1,e_2\in span \{P_ix\}_{i=1}^3$. so $(a,0,0,0)\in span\{P_ix\}_{i=1}^3$.
Also, $P_1x=(0,b,c,d)\in span \{P_ix\}_{i=1}^3$. It follows that $x\in span\{P_ix\}_{i=1}^3$.
\end{example}

An examination of the above two examples shows a general result.

\begin{theorem}
Let $\{e_i\}_{i=1}^n$ be an orthonormal basis for $R^n$. If $\{W_i\}_{i=1}^m$ does norm retrieval for $[e_1,e_2,\ldots,e_k]$, $1\le k < n$,
then $\{W_i\cup[e_{k+1},e_{k+2},\ldots,e_n]\}_{i=1}^m$ does norm retrieval in $R^n$. If the $W_i$ are hyperplanes in $[e_1,\ldots,e_k]$
then the new sets are hyperplanes in $R^n$.
\end{theorem}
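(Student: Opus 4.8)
The plan is to use the projection characterization of norm retrieval (Theorem \ref{T5}): a finite family of subspaces does norm retrieval in its ambient space iff every nonzero vector lies in the span of its projections onto those subspaces. Write $V=[e_1,\dots,e_k]$ and $U=[e_{k+1},\dots,e_n]$, so that $\mathbb{R}^n=V\oplus U$ orthogonally and $W_i':=W_i\cup U=W_i\oplus U$. Let $P_i$ be the orthogonal projection of $V$ onto $W_i$ and $Q_i$ the orthogonal projection of $\mathbb{R}^n$ onto $W_i'$. The first step is the identity $Q_i(v+u)=P_iv+u$ for $v\in V$, $u\in U$: indeed $u\in U\subseteq W_i'$ gives $Q_iu=u$, while $v\perp U$ gives $Q_iv=P_iv$. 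Hence it suffices to show that $v+u\in\operatorname{span}\{P_iv+u\}_{i=1}^m$ for every $0\ne x=v+u$.

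I would then dispose of the easy configurations. If $v=0$, then $Q_1x=u=x$. If $u=0$, then $v\ne0$, and Theorem \ref{T5} applied inside $V$ to the norm-retrievable family $\{W_i\}$ yields $v\in\operatorname{span}\{P_iv\}_i=\operatorname{span}\{Q_ix\}_i$. If $P_{i_0}v=0$ for some $i_0$, then $Q_{i_0}x=u$, so $u\in\operatorname{span}\{Q_ix\}_i$; combining this with $v\in\operatorname{span}\{P_iv\}_i\subseteq\operatorname{span}\{Q_ix\}_i+[u]$ puts $x=v+u$ in the span. The hyperplane assertion is immediate: $\dim W_i=k-1$ gives $\dim W_i'=(k-1)+(n-k)=n-1$.

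What remains is the case $v\ne0$, $u\ne0$, and $P_iv\ne0$ for all $i$. Put $w_i=P_iv$; we know $v=\sum_i\alpha_iw_i$ for some scalars, and if we can choose these with $\sum_i\alpha_i=1$, then $v+u=\sum_i\alpha_i(w_i+u)=\sum_i\alpha_iQ_ix$ and we are done. The affine set of representations $v=\sum_i\beta_iw_i$ contains one with coefficient-sum $1$ unless the functional $\beta\mapsto\sum_i\beta_i$ vanishes on $\{\beta:\sum_i\beta_iw_i=0\}$; in that degenerate subcase the coefficient-sum is forced to equal $\langle\mu,v\rangle$, where $\mu\in V$ is any vector with $\langle\mu,P_iv\rangle=1$ for all $i$. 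So the theorem reduces to the claim: if $\{W_i\}$ does norm retrieval in $V$, $0\ne v\in V$, and $\langle\mu,P_iv\rangle=1$ for all $i$, then $\langle\mu,v\rangle=1$.

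I expect this last reduction to be the genuine obstacle, and I would concentrate the effort there. It strictly strengthens norm retrieval in $V$ (taking $\mu=v$, it would say that $\|P_iv\|=1$ for all $i$ forces $\|v\|=1$), and the symmetry $\langle\mu,P_iv\rangle=\langle P_i\mu,v\rangle$ means the two natural tools, $v\in\operatorname{span}\{P_iv\}_i$ and $\mu\in\operatorname{span}\{P_i\mu\}_i$, only feed back the identity $\sum_i\alpha_i=\langle\mu,v\rangle$. For $V$ two-dimensional with $\{W_1,W_2\}$ an orthonormal pair of lines, $\mu=v=e_1+e_2$ satisfies $\langle\mu,P_iv\rangle=1$ for $i=1,2$ while $\langle\mu,v\rangle=2$, so the claim cannot hold under norm retrieval alone; I would therefore expect to need an extra hypothesis, automatically present in the two motivating examples (where each base family consists of at least three lines in the plane and so carries the needed redundancy) --- for instance, that no nonzero $v\in V$ has $\{P_iv\}_i$ linearly independent.
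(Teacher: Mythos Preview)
The paper offers no proof of this statement; it merely writes ``An examination of the above two examples shows a general result'' and states the theorem. So there is nothing to compare your argument against.

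More importantly, your analysis is correct and your suspicion at the end is not a defect of your approach but a defect of the theorem: the statement is \emph{false} as written. Take your own counterexample to the reduced claim and run it through Theorem~\ref{T5}. With $k=2$, $n=3$, $W_1=[e_1]$, $W_2=[e_2]$ (an orthonormal pair of lines doing norm retrieval in $V=[e_1,e_2]$), the enlarged subspaces are $W_1'=[e_1,e_3]$ and $W_2'=[e_2,e_3]$. For $x=e_1+e_2+e_3$ one has $Q_1x=e_1+e_3$ and $Q_2x=e_2+e_3$, and $x\notin\operatorname{span}\{e_1+e_3,\,e_2+e_3\}$ since $a=b=1$ forces $a+b=2\neq 1$ in the third coordinate. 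By Theorem~\ref{T5}, $\{W_1',W_2'\}$ fails norm retrieval in $\mathbb{R}^3$. This is precisely the example the paper itself exhibits in Section~\ref{s:phase(norm) retrievable Fusion frames} (after Theorem~\ref{T: Fusion frames property 1}) as a fusion frame failing norm retrieval, so the paper is internally inconsistent on this point.

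Your reduction---that one needs $\sum_i\alpha_i=1$ among the representations $v=\sum_i\alpha_iP_iv$, and that this is obstructed exactly when there is $\mu$ with $\langle\mu,P_iv\rangle=1$ for all $i$ but $\langle\mu,v\rangle\neq 1$---is clean and pinpoints the issue. The extra hypothesis you propose (e.g.\ that $\{P_iv\}_{i=1}^m$ is never independent for $v\neq 0$, which in particular holds whenever $m>\dim V$) is the kind of redundancy the two motivating examples happen to enjoy, and some such hypothesis is genuinely required for the conclusion to hold.
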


\section{Full spark, Finitely full spark and norm retrievability}
{\bf Spark} is also an essential issue here.
\begin{definition}\label{D:full spark}
\textrm A family of vectors $\{f_{i}\}_{i=1}^{m}$ \textrm in $\mathbb{R}^n \, (m \geq n)$ has {\bf spark k} if for every $I \subset [m] \, \mbox with \, |I|=k-1$ \textrm, $\{f_{i}\}_{i \in I}$ \textrm
is linearly independent. It is full spark if $k=n+1$ and hence every n-element subset spans  $\mathbb{R}^n$.
\end{definition}

It is proved in \cite{BCCHT18} that finitely full spark frames are dense in all frames in both the finite and infinite dimensional case.
 By theorem \ref{T:phase retrievality and complement property for finite case}\, the families of vectors $\{f_{k}\}_{k=1}^{m}$ which do phase retrieval in $\mathbb{R}^n$ are dense in the family of $m\geq
 (2n-1)$-element sets of vectors in $\mathbb{R}^n$ for every finite $n$. Since every phase retrievable frame in $\mathbb{R}^n$ is a norm retrievable frame we have:
 the families of vectors $\{f_{k}\}_{k=1}^{m}$ which do norm retrieval in $\mathbb{R}^n$ are dense in the family of $m\geq (2n-1)$-element sets of vectors in $\mathbb{R}^n$ for every finite $n$.
 Now we focus on the case when $m\leq (2n-2)$.
We will show that
the families of norm retrievable frames $\{f_{k}\}_{k=1}^{m}$ in $\mathbb{R}^n$ are not dense in the family of $m\leq (2n-2)$-element sets of vectors in $\mathbb{R}^n$ for every finite $n$. This will require some preliminary results.
\begin{theorem} \label{T:equivalent cases}
Let $X$ and $Y$ be subspaces of $\mathbb{R}^n$ and $T:X \longrightarrow Y$ be an operator with $\|I-T\|< \epsilon$. Let $P$ be the projection onto $X$. Define $S:\mathbb{R}^n \longrightarrow \mathbb{R}^n$ by:
$$S(Px+(I-P)x)=TPx+(I-P)x.$$
Then
\begin{enumerate}
\item \quad $\|I-S\|< \epsilon$ \ and \ $\|S\|< 1+ \epsilon$ \\
\item \quad $\|S^{-1}\|< \frac{1}{1- \epsilon}$ \ and \ $\|I-S^{-1}\|< \frac{\epsilon}{1- \epsilon}$ \\
\item \quad $Q=SPS^{-1}$ \ is \ a \ projection \ onto \ Y. \\
\item \quad  $\|P-Q\|< \frac{1+ \epsilon^2}{1- \epsilon}$ \\
\item \quad We have for $x\in X$, $\|Qx\|< \epsilon \|x\|$. \\
\item \quad For  $x \perp X$, $\|(I-Q)x\| \geq (1- \epsilon) \|x\|.$ \\
\item \quad For  $x \perp X$, $\|x\|=1$,
$\|x-\frac{(I-Q)x}{\|(I-Q)x\|}\| \leq \frac{2 \epsilon}{1- \epsilon}= \delta$.
\end{enumerate}
It follows that given $\delta>0$ there is an $\epsilon >0$ so that given the assumptions of the theorem, if $x\perp X$ with $\|x\|=1$, there is a
$y\perp Y$ with $\|y\|=1$ and $\|x-y\|<\delta$.
\end{theorem}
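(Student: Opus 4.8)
The plan is to reduce the whole statement to one reformulation of $S$, after which items (1)--(7) are perturbation bookkeeping and the closing sentence is a normalization estimate. Decomposing an arbitrary $v\in\RR^n$ as $v=Pv+(I-P)v$, the definition of $S$ reads $Sv=TPv+(I-P)v$, that is,
\[
S=TP+(I-P)=I-(I-T)P,
\]
where $\norm{I-T}$ is read as $\sup\{\norm{x-Tx}:x\in X,\ \norm{x}=1\}$. Then $I-S=(I-T)P$, and since $\norm{P}\le 1$ we get at once $\norm{I-S}\le\norm{I-T}<\epsilon$ and $\norm{S}\le 1+\norm{I-S}<1+\epsilon$, which is (1). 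Since $\norm{I-S}<\epsilon<1$, the Neumann series $S^{-1}=\sum_{k\ge 0}(I-S)^{k}$ converges, giving $\norm{S^{-1}}\le(1-\epsilon)^{-1}$; and from $I-S^{-1}=-S^{-1}(I-S)$ we read off $\norm{I-S^{-1}}\le\norm{S^{-1}}\,\norm{I-S}<\epsilon/(1-\epsilon)$, which is (2).

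Next I would clear (3) and (4) algebraically. Idempotency is formal: $Q^{2}=SP^{2}S^{-1}=SPS^{-1}=Q$. For the range, $Q\RR^n=SP\RR^n=S(X)$, and since $S$ coincides with $T$ on $X$ while $\norm{I-T}<1$ forces $T$ to be injective on $X$, one has $S(X)=T(X)=Y$ (using $\dim Y=\dim X$). I would also note that $S$ restricts to the identity on $X^{\perp}$, hence so does $S^{-1}$, so that for $x\perp X$ one has $PS^{-1}x=0$, whence $Qx=0$ and $(I-Q)x=x$. For (4), using $S=I-(I-T)P$ and $PSS^{-1}=P$,
\[
P-Q=P-SPS^{-1}=(PS-SP)S^{-1}=(I-P)(I-T)P\,S^{-1},
\]
so $\norm{P-Q}\le\norm{I-P}\,\norm{I-T}\,\norm{P}\,\norm{S^{-1}}<\epsilon/(1-\epsilon)$, which in particular gives the stated bound. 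Items (5)--(7) then follow by expanding $Qx$ and $(I-Q)x=x-Qx$ and substituting the bounds from (1)--(2); (7) is the elementary inequality $\norm{x-w/\norm{w}}\le 2\norm{x-w}$ for a unit vector $x$ and $w\ne 0$, applied with $w=(I-Q)x$.

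For the closing assertion: given $\delta>0$, take $\epsilon>0$ small. Since $Y=S(X)$ with $\norm{I-S}<\epsilon$, every unit vector of $Y$ lies within $O(\epsilon)$ of $X$ and conversely, so the orthogonal projections $P_X,P_Y$ onto $X,Y$ satisfy $\norm{P_X-P_Y}=O(\epsilon)$. Given $x\perp X$ with $\norm x=1$, set $y=(I-P_Y)x/\norm{(I-P_Y)x}$, the unit vector of $Y^{\perp}$ closest to $x$; then $\norm y=1$ and $y\perp Y$ by construction, while $\norm{x-(I-P_Y)x}=\norm{P_Yx}=\norm{(P_Y-P_X)x}\le\norm{P_X-P_Y}=O(\epsilon)$ since $P_Xx=0$. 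Plugging this into the normalization inequality of (7) gives $\norm{x-y}\le 2\norm{x-(I-P_Y)x}=O(\epsilon)<\delta$ once $\epsilon$ is small enough in terms of $\delta$.

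The step I expect to be the real obstacle is precisely this passage from norm estimates on $S,S^{-1},Q$ to control of the distance between the \emph{genuine} orthogonal projections onto $X$ and onto $Y$ --- that is, guaranteeing the produced $y$ is honestly orthogonal to $Y$ rather than merely to $X$. Equivalently, one must check that the oblique projection $Q=SPS^{-1}$ onto $Y$ lies within $O(\epsilon)$ of the orthogonal projection onto $Y$; once that single geometric point is pinned down, the rest is Neumann-series arithmetic and the triangle inequality.
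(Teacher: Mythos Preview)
Your handling of (1)--(4) is essentially the paper's: the identity $S=I-(I-T)P$ and the Neumann series are exactly what the paper uses, and your factorization $P-Q=(I-P)(I-T)P\,S^{-1}$ in (4) even yields the sharper bound $\epsilon/(1-\epsilon)$ in place of the paper's $(1+\epsilon^{2})/(1-\epsilon)$.

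Where you diverge is in (5)--(7) and the closing sentence, and here you have actually seen further than the paper. Your observation that $S$ (hence $S^{-1}$) restricts to the identity on $X^{\perp}$ is correct and forces $Qx=SPS^{-1}x=0$ for every $x\perp X$; the paper instead grinds out (5)--(7) by inequality-chasing without noticing that all three collapse to $0$, $1$, $0$. More to the point, this shows that $Q=SPS^{-1}$ is the \emph{oblique} projection onto $Y$ along $X^{\perp}$, so the vector $(I-Q)x/\|(I-Q)x\|$ produced in (7) is simply $x$ again and lies in $X^{\perp}$, not in $Y^{\perp}$. The paper's ``It follows\dots'' therefore does not follow from (7) as written, and you have correctly isolated this as the genuine obstacle.

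Your repair is the right one: pass to the honest orthogonal projection $P_{Y}$, bound $\|P_{X}-P_{Y}\|$ by the gap between $X$ and $Y=S(X)$ (at most $\epsilon/(1-\epsilon)$, since every unit vector of $X$ is within $\epsilon$ of $Y$ via $T$, and every unit vector of $Y$ is within $\epsilon/(1-\epsilon)$ of $X$ via $S^{-1}$), and set $y=(I-P_{Y})x/\|(I-P_{Y})x\|\in Y^{\perp}$. Your normalization inequality $\|x-w/\|w\|\|\le 2\|x-w\|$ with $w=(I-P_{Y})x$ then gives $\|x-y\|\le 2\|P_{Y}x\|=2\|(P_{Y}-P_{X})x\|\le 2\epsilon/(1-\epsilon)$, recovering exactly the bound recorded in (7) but now with $y\perp Y$ as required. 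So your proposal is correct and in fact patches a gap in the paper's treatment of the final assertion.
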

\begin{proof}
(1)  We compute
\begin{align*}
\|(I-S)x\|&= \|x-Sx\|\\
&= \|Px+(I-P)x-S(Px+(I-P)x)\|\\
&= \|Px+(I-P)x-TPx-(I-P)x\|\\
&=\|Px-TPx\|\\
&= \|(I-T)Px\|\\
&\le \epsilon \|Px\| \le \epsilon \|x\|.
\end{align*}
Also,
\[ \|Sx\|\le \|x\|+\|x-Sx\| \le \|x\|+\epsilon \|x\|=(1+\epsilon)\|x\|.\]
(2) By the Neuman series,
\[ S^{-1}=(I-(I-S))^{-1} = \sum_{i=0}^{\infty}(I-S)^i,\]
and so
\[ \|S^{-1}\|\le \sum_{i=0}^{\infty}\|(I-S)^i\|=1+\sum_{i=1}^{\infty}\epsilon^i= 1+\frac{\epsilon}{1-\epsilon}=\frac{1}{1-\epsilon}.\]
Also,
\[ \|I-S^{-1}\| =\| \sum_{i=1}^{\infty}(I-S)^i\|\le \sum_{i=1}^{\infty}\|(I-S)^i\|\le \frac{\epsilon}{1-\epsilon}.\]
(3) Since $SPS^{-1}SPS^{-1}= SP^2S^{-1}=SPS^{-1}$, this is a projection. If $y\in Y$, $S^{-1}y\in X$ and so $PS^{-1}y=S^{-1}y$ and
hence $SPS^{-1}SPS^{-1}y=SS^{-1}y=y$. \\[10pt]
(4) We compute
\begin{align*}
\|P-Q\|&=\|P-SPS^{-1}\|\\
&=\|P-SP+SP-SPS^{-1}\|\\
&\le \|(I-S)P\|+\|SP(I-S^{-1})\|\\
&\le \|I-S\|\|P\|+\|SP\|\|I-S^{-1}\|\\
&\le \epsilon +\frac{\epsilon}{1-\epsilon}\|S\|\|P\|\\
&\le 1+\frac{\epsilon(1+\epsilon)}{1-\epsilon}=\frac{1+\epsilon^2}{1-\epsilon}.
\end{align*}
(5) We compute for $x\perp X$,
\[ \|Qx\|\le \|(P-Q)x\|+\|Px\| \le (\epsilon +0)\|x\|\]
(6) We compute for $x\perp X$
\[ \|(I-Q)x\|\ge \|x\|-\|Qx\|\ge \|x\|-\|Qx\|\ge (1-\epsilon)\|x\|.\]
(7) We compute for $x\perp X$, $\|x\|=1$ using (5) and (6),
\[1-\epsilon \le \|(I-Q)x\|\le \|x\|+\|Qx\|\le 1+\epsilon.\]
So
\[\frac{1}{1+\epsilon} \le \frac{1}{\|(I-Q)x\|}\le \frac{1}{1-\epsilon},\]
so
\[\frac{-1}{1+\epsilon}\ge \frac{-1}{\|(I-Q)x\|}\ge \frac{-1}{1-\epsilon},\]
and
\[ 1-\frac{1}{1+\epsilon}=\frac{\epsilon}{1+\epsilon}\ge 1-\frac{1}{\|(I-Q)x\|} \ge 1-\frac{1}{1-\epsilon}=\frac{-\epsilon}{1-\epsilon}.\]
Hence,
\[ \left| 1-\frac{1}{\|(I-Q)x\|}\right | \le \frac{\epsilon}{1-\epsilon}.\]
Now,
\begin{align*}
\left \|x-\frac{(I-Q)x}{\|(I-Q)x\|}\right \|&\le \|x-(I-Q)x\|+\left \|(I-Q)x-\frac{(I-Q)x}{\|(I-Q)x\|}\right \|\\
&= \|Qx\|+\|(I-Q)x\|\left |1-\frac{1}{\|(I-Q)x\|}\right |\\
&\le \epsilon +(1+\epsilon)\frac{\epsilon}{1-\epsilon}=\epsilon \left ( 1+\frac{1}{1-\epsilon}+\frac{\epsilon}{1- \epsilon} \right )= \frac{2\epsilon}{1-\epsilon}.
\end{align*}
\end{proof}
We can now prove the main result.

\begin{theorem} \label{T:norm retrievality and perturbation}
Let \ $\{x_i\}_{i=1}^m$, $m\leq 2n-2$ be a frame in $\mathbb{R}^n$ which fails norm retrieval. Then there is an $\epsilon>0$ so that whenever
$\{y_i\}_{i=1}^m$ are vectors satisfying
\[ \sum_{i=1}^m\|x_i-y_i\|<\epsilon,\]
then $\{y_i\}_{i=1}^m$ also fails norm retrieval.
\end{theorem}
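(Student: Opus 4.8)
The plan is to read off an explicit certificate of failure from Theorem~\ref{T:norm retrievablity and perp} and then show this certificate persists under a small perturbation using the subspace‑perturbation estimates packaged in Theorem~\ref{T:equivalent cases}. Since $\{x_i\}_{i=1}^m$ fails norm retrieval, Theorem~\ref{T:norm retrievablity and perp} produces a subset $I\subset[m]$ (necessarily $\emptyset\neq I\neq[m]$) and unit vectors $u\perp X_1:=[\{x_k\}_{k\in I}]$ and $v\perp X_2:=[\{x_k\}_{k\in I^c}]$ with $c:=\langle u,v\rangle\neq 0$. Because $u\neq 0\neq v$, neither $X_1$ nor $X_2$ is all of $\RR^n$, and $\dim X_1+\dim X_2\le|I|+|I^c|=m\le 2n-2$. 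I would fix $I_0\subset I$ indexing a basis of $X_1$ and $I_0'\subset I^c$ indexing a basis of $X_2$.

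Next, transport $u$ and $v$. Given $\{y_i\}_{i=1}^m$ with $\sum_i\|x_i-y_i\|<\epsilon$, for small $\epsilon$ the set $\{y_k\}_{k\in I_0}$ is still linearly independent, and the assignment $x_k\mapsto y_k$ $(k\in I_0)$ extends to a linear map $T_1\colon X_1\to Y_1:=[\{y_k\}_{k\in I_0}]$ with $\|I-T_1\|$ as small as we like once $\epsilon$ is small (the modulus of smallness depending only on the conditioning of the fixed independent set $\{x_k\}_{k\in I_0}$). By the final assertion of Theorem~\ref{T:equivalent cases}, for any prescribed $\delta>0$ we may shrink $\epsilon$ so that there is a unit vector $u'\perp Y_1$ with $\|u-u'\|<\delta$; symmetrically a unit vector $v'\perp Y_2:=[\{y_k\}_{k\in I_0'}]$ with $\|v-v'\|<\delta$. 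Then $|\langle u',v'\rangle-c|\le\|u'-u\|+\|v'-v\|<2\delta$, so choosing $\delta<|c|/2$ forces $\langle u',v'\rangle\neq 0$.

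To finish one wants $u'\perp y_k$ for \emph{all} $k\in I$ and $v'\perp y_k$ for all $k\in I^c$: then $([\{y_k\}_{k\in I}])^\perp$ and $([\{y_k\}_{k\in I^c}])^\perp$ contain the non‑orthogonal pair $u',v'$, so Theorem~\ref{T:norm retrievablity and perp} shows $\{y_i\}_{i=1}^m$ fails norm retrieval. When $\{x_i\}_{i=1}^m$ is full spark this is automatic: any bad partition then has $|I|,|I^c|\le n-1$ (a block of $n$ or more full‑spark vectors spans $\RR^n$, so its orthogonal complement is $\{0\}$ and could not contain a nonzero $u$ or $v$), hence $\{x_k\}_{k\in I}$ and $\{x_k\}_{k\in I^c}$ are already independent, $I_0=I$ and $I_0'=I^c$, and there are no leftover indices.

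The main obstacle is exactly the italicized step. If $\{x_k\}_{k\in I}$ is linearly dependent there are indices $k\in I\setminus I_0$, and for a generic perturbation $y_k\notin Y_1$, so the transported vector $u'$ need not be orthogonal to $y_k$ (and symmetrically on the $I^c$ side); since $Y_1$ has strictly larger dimension than $X_1$, its orthogonal complement has shrunk and may no longer meet $v^{\perp}$ transversally. One must therefore either show that such leftover $y_k$ can be absorbed into the opposite block — i.e. that $v'$ can be chosen orthogonal to them as well — or enlarge $Y_1$ by these extra directions and argue that $Y_1^{\perp}$ still contains a vector with nonzero inner product against (a small perturbation of) $v$; either way one must combine $m\le 2n-2$ with some nondegeneracy of $\{x_i\}_{i=1}^m$ beyond the bare failure of norm retrieval. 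Controlling these leftover directions — and thereby pinning down the precise hypothesis under which the statement holds — is the heart of the matter; the remainder is the routine perturbation bookkeeping already carried out in Theorem~\ref{T:equivalent cases}.
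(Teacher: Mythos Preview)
Your approach is line-for-line the paper's: extract a witnessing partition $I$ and non-orthogonal unit vectors $x\perp X_1$, $y\perp X_2$ via Theorem~\ref{T:norm retrievablity and perp}, choose independent subfamilies $J_1\subset I$, $J_2\subset I^c$ spanning $X_1,X_2$, apply Theorem~\ref{T:equivalent cases} to the linear maps $T_k\colon x_i\mapsto y_i$ ($i\in J_k$) to manufacture unit vectors $z\perp Y_1$, $w\perp Y_2$ close to $x,y$, and conclude $z\not\perp w$. Even the quantitative bookkeeping (the bound $\frac{2\epsilon}{1-\epsilon}$ from part~(7) of Theorem~\ref{T:equivalent cases}) matches.

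The obstacle you isolate in your final paragraph is not addressed by the paper either. The paper writes ``By Theorem~\ref{T:equivalent cases} there are vectors $\|z\|=\|w\|=1$ with $z\perp y_i$ for all $i\in I$, $w\perp y_i$ for all $i\in I^c$,'' but Theorem~\ref{T:equivalent cases} only delivers $z\perp Y_1=\mathrm{span}\{y_i\}_{i\in J_1}$; the leftover indices $i\in I\setminus J_1$ are simply not discussed. Your observation that a generic perturbation sends such $y_i$ outside $Y_1$ is correct, so the gap you flag is real and is present verbatim in the paper's own argument. The hypothesis $m\le 2n-2$ is never invoked inside the proof (only in the remark afterward explaining why the full-spark case with $m\ge 2n-1$ behaves differently), and in particular is not used to force $|I|,|I^c|\le n-1$. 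In short: you have reproduced the paper's proof and been more honest than the paper about where it is incomplete.
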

\begin{proof}
Since $\{x_i\}_{i=1}^m$ fails norm retrieval, there is some $I\subset [m]$ so that there are vectors $\|x\|=\|y\|=1$, $x\perp x_i$ for all $i\in I$,
$y\perp x_i$ for all $i\in I^c$ and $x$ is not orthogonal to $y$. That is, there is a $\delta>0$ so that $\|x-y\|\le \sqrt{2}-\delta$. Choose $\epsilon >0$
so that
\[ \frac{2\epsilon}{1-\epsilon}<\frac{\delta}{3}.\]
Let $J_1\subset I$ and $J_2\subset I^c$ with $\{x_i\}_{i\in J_k}$ linearly independent for $k=1,2$ and
\[ X_1=span\{x_i\}_{i\in J_1}=span\{x_i\}_{i\in I}\mbox{ and } X_2=span\{x_i\}_{i\in J_2}=span\{x_i\}_{i\in I^c}.\]
Let $Y_1=span\{y_i\}_{i\in J_1}$ and $Y_2=span\{y_i\}_{i\in J_2}$ and define operators $T_k:X_k\rightarrow Y_k$
for $k=1,2$ by $T_kx_i=y_i$ for $i\in J_k$.
By Theorem \ref{T:equivalent cases} there are vectors $\|z\|=\|w\|=1$ with $z\perp y_i$ for all $i\in I$, $w\perp y_i$ for all $i\in I^c$ and
\[ \|x-z\|<\frac{\delta}{3}\mbox{ and } \|y-w\|<\frac{\delta}{3}.\]
Now
\[ \|z-w\|\le \|z-x\|+\|x-y\|+\|y-w\|< \frac{\delta}{3}+\sqrt{2}-\delta+\frac{\delta}{3}=\sqrt{2}-\frac{\delta}{3}.\]
It follows that $z$ and $w$ are not orthogonal and so by Theorem \ref{T:norm retrievablity and perp} $\{y_i\}_{i=1}^m$ fails norm retrieval.
\end{proof}
On the surface, it looks like the above argument works equally well for $m\ge 2n-1$. The problem is that in this case, if the vectors are full
spark then whenever we divide them into two sets, one will span the whole space. So the only vector orthogonal to this set is the zero
vector and this vector is orthogonal to all vectors. I.e. This set does norm retrieval.

Now, we will consider the infinite dimensional Hilbert space $\ell ^2$. In $\ell ^2$, every phase (norm) retrievable family need not to be full spark. For example if we write a norm retrievable set twice, the latter is a
norm retrievable set again, but it is not full spark. The corresponding definition of a full spark family for $\ell ^2$ first appeared in  \cite{BCCHT18}.
\begin{definition}\label{D:family full spark in}
A family of vectors $\{f_{k}\}_{k=1}^{\infty}$ in $\ell^ 2$ is \textbf{full spark} if every infinite subset spans $\ell ^2$.
\end{definition}
If a set is full spark in  $\ell^{2}$, we can drop infinitely many vectors and as long as there are infinitely many left, it still spans  $\ell^{2}$.

The concept of ``finitely full spark vectors for $\ell^2$" first was introduced in \cite{BCCHT18}.\\
\begin{definition}\label{D:finitely full spark}
A set of vectors $\{f_{k}\}_{k=1}^{\infty}$ in $\ell^{2}$ is \textbf{finitely full spark} if for every $I \subset \mathbb{N}$ with $|I|=n$,  $\{P_{I}f_{k}\}_{k=1}^{\infty}$ is full spark (i.e., spark
$n+1$), where $P_{I}$ is the orthogonal projection onto $span\{e_{k}\}_{k \in I}$.
\end{definition}

\begin{example}
The  set $\{ 1/2^i e_1+1/{\{2^i+1}\} e_2 + \dots + 1/{\{2^i+(n-1)}\}e_n\}_{i=1}^{\infty}$
is finitely linear independent and so is finitely full spark for $\ell ^2$ for any arbitrary $n\in \mathbb{N}$.
\end{example}

It is shown in \cite{BCCHT18} that the families of vectors which do phase retrieval in $\ell^2$ are not dense in the infinite families of vectors in $\ell^2$. We will show a similar result for the families
of vectors which do norm retrieval in $\ell^2$.
By \cite {CCJW14} we know if $\{\phi_i\}_{i=1}^{n}$ does norm retrieval in $\mathbb{R}^{n}$, then the vectors of the frame are orthogonal. This is true in  $\ell^2$ also.
\begin{proposition}\label{T:every Riesz bases that do norm retrieval are orthogonal sets}
Let $\{\phi_i\}_{i=1}^{\infty}$ be a Riesz basis doing norm retrieval in $\ell^2$, then the vectors $\{\phi_i\}_{i=1}^{\infty}$ are orthogonal.
\end{proposition}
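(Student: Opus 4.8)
The plan is to apply the characterization of norm retrieval in Theorem~\ref{T:norm retrievablity and perp} to the one-element splitting $I=\{j\}$, $I^c=\mathbb{N}\setminus\{j\}$, and to combine it with the existence of a biorthogonal dual for the Riesz basis. Concretely, I would first recall that since $\{\phi_i\}_{i=1}^{\infty}$ is a Riesz basis for $\ell^2$, it admits a (unique) biorthogonal dual Riesz basis $\{\tilde\phi_i\}_{i=1}^{\infty}\subset\ell^2$ with $\langle\phi_i,\tilde\phi_j\rangle=\delta_{ij}$; equivalently, writing $T$ for the bounded invertible synthesis operator $Te_i=\phi_i$, one has $\tilde\phi_j=(T^{-1})^*e_j\neq 0$. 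Fix $j\in\mathbb{N}$. By biorthogonality $\tilde\phi_j\perp\phi_k$ for every $k\neq j$, so $\tilde\phi_j\in\bigl(\overline{span}\{\phi_k\}_{k\neq j}\bigr)^{\perp}$; on the other hand $\overline{span}\{\phi_k\}_{k\in\{j\}}=[\phi_j]$, whence $\bigl(\overline{span}\{\phi_k\}_{k\in\{j\}}\bigr)^{\perp}=[\phi_j]^{\perp}$.

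Next I would invoke Theorem~\ref{T:norm retrievablity and perp}: since $\{\phi_i\}_{i=1}^{\infty}$ does norm retrieval, $[\phi_j]^{\perp}$ is orthogonal to $\bigl(\overline{span}\{\phi_k\}_{k\neq j}\bigr)^{\perp}$, and in particular $[\phi_j]^{\perp}\perp\tilde\phi_j$. Hence $\tilde\phi_j\in\bigl([\phi_j]^{\perp}\bigr)^{\perp}=[\phi_j]$, i.e.\ $\tilde\phi_j=c_j\phi_j$ for some scalar $c_j$; pairing with $\phi_j$ and using $\langle\phi_j,\tilde\phi_j\rangle=1$ forces $c_j=1/\|\phi_j\|^2\neq 0$. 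Thus $\phi_j$ is a nonzero multiple of $\tilde\phi_j$, and since $\tilde\phi_j\perp\phi_k$ for all $k\neq j$ we conclude $\phi_j\perp\phi_k$ for all $k\neq j$. As $j\in\mathbb{N}$ was arbitrary, $\{\phi_i\}_{i=1}^{\infty}$ is an orthogonal set, which is the assertion.

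The argument is short because Theorem~\ref{T:norm retrievablity and perp} does the heavy lifting; the single place where the \emph{Riesz basis} hypothesis is genuinely used is the existence of the nonzero dual vectors $\tilde\phi_j\in\ell^2$ biorthogonal to $\{\phi_i\}$, so I regard verifying that (or, equivalently, that $\bigl(\overline{span}\{\phi_k\}_{k\neq j}\bigr)^{\perp}\neq\{0\}$) as the only point worth care — for a merely complete or frame-like system such functionals need not be available in $\ell^2$, and the conclusion would fail. It is also worth noting that one obtains an orthogonal, not necessarily orthonormal, set, which is sharp since every orthogonal basis trivially does norm retrieval.
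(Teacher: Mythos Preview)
Your proof is correct and follows essentially the same approach as the paper's: both apply Theorem~\ref{T:norm retrievablity and perp} to the splitting $I=\{j\}$ versus $I^c$, and both exploit the nonzero vector orthogonal to $\{\phi_k\}_{k\neq j}$ furnished by the Riesz basis hypothesis (the paper constructs it as a unit $x\perp\phi_i$ for $i\neq j$, you name it as the biorthogonal dual $\tilde\phi_j$). Your direct argument that $\tilde\phi_j\in[\phi_j]$ is a cleaner packaging than the paper's contradiction via the pair $x,\,y=x-\langle x,\phi_j\rangle\phi_j$, but the mathematical content is the same.
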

\begin{proof}
 Without loss of generality, assume $\|\phi_i\|=1$ and there is some $j\in I$ with
$\phi_j$ not orthogonal to span$\{\phi_i\}_{i \ne j}$. Choose a unit vector $x \perp \phi_i$ for all
$i \ne j$ So that $x\not= \phi_j$. Let $y=x-\langle x,\phi_j\rangle \phi_j$. Now
$\langle \phi_j,y\rangle=\langle \phi_j,x\rangle-\langle x,\phi_j\rangle \langle \phi_j, \phi_j \rangle=0$. Let $I=\{i: i \ne j \}$. Then $x \perp span \{\phi_i\}_{i \in I}$ and $y \perp \phi_j$, but
$$ \langle x,y \rangle=\langle x,x \rangle - \langle x,\phi_j \rangle \langle x,\phi_j \rangle=
1-|\langle x, \phi_j \rangle |^2 \ne 0,$$
Contradicting Theorem \ref{T:norm retrievablity and perp}.
\end{proof}
\begin{lemma} \label{T:unit norm sequences}
Let $\{\phi_i\}_{i=1}^{\infty}$ and $\{\psi_i\}_{i=1}^{\infty}$ be Riesz bases for $\ell^2$. Given $\epsilon >0$ arbitrary, there exists $\delta >0$ such that
$$\mbox{if }\sum_{i=1}^{\infty} \|\phi_i-\psi_i\| \leq \delta\quad \mbox{ then }\quad
\sum_{i=1}^{\infty} \left \|\frac{\phi_i}{\|\phi_i\|}-\frac{\psi_i}{\|\psi_i\|}\right \| \leq \epsilon$$.
\end{lemma}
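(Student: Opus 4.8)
The plan is to reduce the claim to a pointwise estimate comparing the normalizations of two nearby vectors, and then sum. First I would record the elementary fact that for any Riesz basis $\{\phi_i\}_{i=1}^{\infty}$ with bounds $0<A\le B<\infty$, feeding the scalar sequence $c=e_j$ into the Riesz inequality gives $A\le \|\phi_j\|^2\le B$ for every $j$; in particular $\inf_i\|\phi_i\|\ge \sqrt{A}>0$. The same argument applied to $\{\psi_i\}_{i=1}^{\infty}$ (with its own Riesz bounds $A',B'$) gives $\|\psi_i\|\ge\sqrt{A'}>0$, so that the normalized vectors $\phi_i/\|\phi_i\|$ and $\psi_i/\|\psi_i\|$ are all well defined.

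Next I would establish the pointwise bound: for any nonzero $u,v\in\ell^2$,
\[
\left\|\frac{u}{\|u\|}-\frac{v}{\|v\|}\right\|\le \frac{2\,\|u-v\|}{\|u\|}.
\]
This follows from the decomposition $\dfrac{u}{\|u\|}-\dfrac{v}{\|v\|}=\dfrac{u-v}{\|u\|}+v\left(\dfrac{1}{\|u\|}-\dfrac{1}{\|v\|}\right)$, the triangle inequality, and the reverse triangle inequality $\bigl|\,\|u\|-\|v\|\,\bigr|\le\|u-v\|$. Applying it with $u=\phi_i$ and $v=\psi_i$, and using $\|\phi_i\|\ge\sqrt{A}$, yields
\[
\left\|\frac{\phi_i}{\|\phi_i\|}-\frac{\psi_i}{\|\psi_i\|}\right\|\le \frac{2}{\sqrt{A}}\,\|\phi_i-\psi_i\|\qquad\text{for every }i.
\]

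Finally I would sum over $i$ to get $\sum_{i=1}^{\infty}\left\|\frac{\phi_i}{\|\phi_i\|}-\frac{\psi_i}{\|\psi_i\|}\right\|\le \frac{2}{\sqrt{A}}\sum_{i=1}^{\infty}\|\phi_i-\psi_i\|\le \frac{2\delta}{\sqrt{A}}$, so it suffices to take $\delta=\frac{\sqrt{A}\,\epsilon}{2}$. There is essentially no obstacle in this argument; the one point that must not be skipped is the uniform lower bound $\inf_i\|\phi_i\|>0$, which is exactly where the \emph{Riesz basis} hypothesis is used (a merely complete or merely Bessel sequence can have norms tending to $0$, and then the normalization map fails to be uniformly Lipschitz and the statement breaks down). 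It is also worth remarking that the resulting $\delta$ depends only on $A$ and $\epsilon$, not on $B$ nor on the sequence $\{\psi_i\}_{i=1}^{\infty}$.
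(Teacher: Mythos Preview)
Your proof is correct and follows essentially the same approach as the paper: extract a uniform lower bound on the $\|\phi_i\|$ from the Riesz-basis hypothesis, prove a pointwise Lipschitz-type estimate for the normalization map via an add-and-subtract decomposition and the reverse triangle inequality, and then sum. The only cosmetic differences are the choice of intermediate term in the decomposition and your resulting constant $2/\sqrt{A}$, which is slightly sharper than the paper's $2B/A^{2}$ and, as you note, independent of $B$ and of $\{\psi_i\}$.
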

\begin{proof} Since $\{\phi_i\}_{i=1}^{\infty}$ and $\{\psi_i\}_{i=1}^{\infty}$ are Riesz bases,
there are constants $0<A\leq B< \infty$ satisfying $A \leq \|\phi_i\| \leq B$ and $A \leq \| \psi_i \| \leq B$, for all $i \in \mathbb{N}$.
Assume $\sum_{i=1}^{\infty} \|\phi_i-\psi_i\| \leq \delta$ with $ \frac{2B}{A^2} \delta= \epsilon$, then we have
\begin{align}
\sum_{i=1}^{\infty} \left \|\frac{\phi_i}{\|\phi_i\|}-\frac{\psi_i}{\| \psi_i\|}\right \| \notag
&= \sum_{i=1}^{\infty} \frac{1}{\|\phi_i\|\|\psi_i\|}\| \|\psi_i \|\phi_i-\|\phi_i\| \psi_i \| \notag \\
&\leq \frac{1}{A^2}\sum_{i=1}^{\infty}\|\|\psi_i \|\phi_i-\|\phi_i\| \phi_i+\|\phi_i\| \phi_i-\|\phi_i\| \psi_i \| \notag \\
&\leq \frac{1}{A^2}\sum_{i=1}^{\infty}[\|\|\psi_i \|\phi_i-\|\phi_i\| \phi_i\|+\| \|\phi_i\| \phi_i-\|\phi_i\| \psi_i \|] \notag \\
&\leq \frac{1}{A^2}\sum_{i=1}^{\infty} |\|\psi_i\|-\|\phi_i\|| \|\phi_i\|+ \frac{1}{A^2}\sum_{i=1}^{\infty}
\|\phi_i\|\|\phi_i-\psi_i\|     \notag \\
&\leq \frac{B}{A^2}\sum_{i=1}^{\infty} \|\psi_i-\phi_i\| + \frac{B}{A^2}\sum_{i=1}^{\infty}
\|\psi_i-\phi_i\| \notag \\
&= \frac{2B}{A^2}\delta= \epsilon \notag
\end{align}
\end{proof}
\begin{theorem}\label{T:Norm retrievable families are not dense }
The families of vectors which do norm retrieval in $\ell^2$ are not dense in the infinite families of vectors in $\ell^2$.
\end{theorem}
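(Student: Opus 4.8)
The plan is to exhibit a single infinite family $\{\phi_i\}_{i=1}^{\infty}$ in $\ell^2$ together with a radius $\epsilon>0$ such that every family $\{\psi_i\}_{i=1}^{\infty}$ with $\sum_{i=1}^{\infty}\|\phi_i-\psi_i\|<\epsilon$ fails norm retrieval; since these are exactly the members of the open $\epsilon$-ball about $\{\phi_i\}$ in the metric used throughout the paper (the one appearing in Lemma~\ref{T:unit norm sequences}), that is precisely the assertion of the theorem. The mechanism is Proposition~\ref{T:every Riesz bases that do norm retrieval are orthogonal sets}: a Riesz basis does norm retrieval only if it is an orthogonal set. So I would take $\{\phi_i\}$ to be a non-orthogonal Riesz basis with the non-orthogonality concentrated in the first two vectors, for instance $\phi_1=e_1$, $\phi_2=e_1+e_2$, and $\phi_i=e_i$ for $i\ge 3$. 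This family is complete, and since $\sum_i c_i\phi_i=(c_1+c_2)e_1+c_2e_2+\sum_{i\ge 3}c_ie_i$ for finite scalar sequences, invertibility of the $2\times 2$ map $(c_1,c_2)\mapsto(c_1+c_2,c_2)$ shows $\{\phi_i\}$ is a Riesz basis with some bounds $0<A\le B<\infty$; and $\langle\phi_1,\phi_2\rangle=1\ne 0$, so it is not orthogonal and hence fails norm retrieval by the Proposition.

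The radius $\epsilon$ is dictated by two facts. First, stability of Riesz bases: for any finite $(c_i)$,
\[
\Big\|\sum_i c_i(\phi_i-\psi_i)\Big\|\le\Big(\sum_i|c_i|^2\Big)^{1/2}\Big(\sum_i\|\phi_i-\psi_i\|^2\Big)^{1/2}\le\frac{1}{\sqrt A}\Big(\sum_i\|\phi_i-\psi_i\|\Big)\Big\|\sum_i c_i\phi_i\Big\|,
\]
so as soon as $\sum_i\|\phi_i-\psi_i\|<\sqrt A$ the family $\{\psi_i\}$ is a Paley--Wiener perturbation of $\{\phi_i\}$ with constant $<1$ and is therefore again a Riesz basis (here I would cite the classical Paley--Wiener stability theorem for Riesz bases rather than reprove that completeness persists). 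Second, continuity of the Gram entry: the same estimate bounds $\|\psi_1\|,\|\psi_2\|$, and
\[
|\langle\psi_1,\psi_2\rangle-\langle\phi_1,\phi_2\rangle|\le\|\psi_1\|\,\|\psi_2-\phi_2\|+\|\phi_2\|\,\|\psi_1-\phi_1\|\le(1+\epsilon)\epsilon+\sqrt 2\,\epsilon,
\]
which is strictly less than $1=\langle\phi_1,\phi_2\rangle$ for $\epsilon$ small. I would therefore take $\epsilon=\min\{\sqrt A,\epsilon_0\}$ for any $\epsilon_0$ with $(1+\epsilon_0)\epsilon_0+\sqrt 2\,\epsilon_0<1$.

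With this $\epsilon$, every $\{\psi_i\}$ in the open $\epsilon$-ball about $\{\phi_i\}$ is a Riesz basis with $\langle\psi_1,\psi_2\rangle\ne 0$, hence not an orthogonal set, hence does not do norm retrieval by Proposition~\ref{T:every Riesz bases that do norm retrieval are orthogonal sets}; so no norm retrievable family lies in that ball, and such families are not dense. The only step that is not routine continuity is the Riesz-basis stability: $\epsilon$ must be chosen small relative to the \emph{lower} Riesz bound $A$ of $\{\phi_i\}$, and one needs completeness of $\{\psi_i\}$ --- not merely the two-sided Riesz inequality --- to survive the perturbation, which is exactly what the Paley--Wiener theorem provides. (Alternatively one could bypass the Proposition and argue directly from Theorem~\ref{T:norm retrievablity and perp} with $I=\{2,3,\dots\}$, checking that $(\overline{span}\{\psi_i\}_{i\in I})^{\perp}$ is a line not parallel to $\psi_1$ and hence not orthogonal to $\psi_1^{\perp}=(\overline{span}\{\psi_i\}_{i\notin I})^{\perp}$; but this too rests on a subspace-perturbation estimate, so the route through the Proposition is cleanest.)
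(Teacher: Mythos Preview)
Your proof is correct and follows the same route as the paper: pick a non-orthogonal Riesz basis, use Paley--Wiener stability so that every sufficiently close family is again a Riesz basis, show that non-orthogonality persists under the perturbation, and then invoke Proposition~\ref{T:every Riesz bases that do norm retrieval are orthogonal sets} to conclude that no nearby family does norm retrieval. The only difference is in how persistence of non-orthogonality is argued: the paper first normalizes via Lemma~\ref{T:unit norm sequences} and then uses the parallelogram-law fact that unit vectors $x,y$ satisfy $\|x-y\|=\sqrt{2}$ iff $x\perp y$, whereas you bypass the normalization lemma entirely by estimating $|\langle\psi_1,\psi_2\rangle-\langle\phi_1,\phi_2\rangle|$ directly --- a slightly more economical version of the same continuity step.
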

\begin{proof}
  We prove the Theorem in three steps. \\
Step 1: By Proposition \ref{T:every Riesz bases that do norm retrieval are orthogonal sets} if $\{\phi_i\}_{i=1}^{\infty}$ be a Riesz basis doing norm retrieval in $\ell^2$, then the vectors of
$\{\phi_i\}_{i=1}^{\infty}$ are orthogonal.\\
Step 2: It is known that if $\{\phi_i\}_{i=1}^{\infty}$ is a Riesz basis and $\{\psi_i\}_{i=1}^{\infty}$ is close enough to it, then  $\{\psi_i\}_{i=1}^{\infty}$ must be a Riesz basis. \\
Step 3: If $\{\phi_i\}_{i=1}^{\infty}$ is a Riesz basis and $\{\psi_i\}_{i=1}^{\infty}$ is an orthogonal set of vectors arbitrary close enough to $\{\phi_i\}_{i=1}^{\infty}$, then
$\{\phi_i\}_{i=1}^{\infty}$ must be orthogonal.
To prove it: \\
We know  by the parallelogram law if $\|x\|=\|y\|=1$, then $\|x-y\|=\sqrt{2}$ if and only if $x\perp y$.
 In the above problem we may assume that for simplicity we only have two vectors $\{x_1,x_2\} \subset
\{\phi_i\}_{i=1}^{\infty}$ with $\|x_1\|=\|x_2\|=1$ and given $\epsilon >0$
by Lemma \ref{T:unit norm sequences}
there are vectors $\{y_1,y_2\} \subset \{\psi_i\}_{i=1}^{\infty}$ such that $\|y_1\|=\|y_2\|=1$, $y_1 \perp y_2$ and $\|x_i-y_i\|< \epsilon$ then
\[ \sqrt{2}-2\epsilon = \|y_1-y_2\|-2\epsilon \le \|x_1-x_2\|\le \|y_1-y_2\|+2\epsilon= \sqrt{2}+2\epsilon.\] \\
Since $\epsilon$ was arbitrary, it follows that $\sqrt{2}= \|x_1-x_2\|$ and $\{\phi_i\}_{i=1}^{\infty}$ is an orthogonal sequence. Therefore if $\{\phi_i\}_{i=1}^{\infty}$ is a non-orthogonal Riesz basis,
 there is no norm retrievable sequence $\{\psi_i\}_{i=1}^{\infty}$ that is orthogonal and close enough to $\{\phi_i\}_{i=1}^{\infty}$. Thus the families of vectors which do norm retrieval in
$\ell^2$ are not dense in the infinite families of vectors in $\ell^2$.
\end{proof}
Of course by Theorem 8, we can construct many none orthogonal Riesz basic sequences that
are close to an orthogonal Riesz basic sequence.

\begin{example}
 Let $0< \epsilon <1$ be given, let $\{e_i\}_{i=1}^{\infty}$ be an orthonormal basis for $\ell^2$, and choose any vectors $\{x_i\}_{i=1}^{\infty}$
 so that
 \[ \sum_{i=1}^{\infty}\|x_i\|< \epsilon.
 \]
 Then $\{e_i+x_i\}_{i=1}^{\infty}$ is a Riesz basis failing norm retrieval and it is $\epsilon$-close to an orthonormal basis.
 \end{example}


\begin{thebibliography}{9}
\bibitem{AGA2015}
F. Aboutorabi Goudarzi and M. S. Asgari, \textit{New characterizations of fusion bases and
Riesz fusion bases in Hilbert spaces}, Linear and Topological Algebra,
Vol. 04, No. 02, (2015), 131-142.
\bibitem{BCE2006}
R. Balan, P. G. Casazza, D. Edidin, \textit{On signal reconstruction without  phase}, Appl. Comput. Harmonic Anal. 20, 3, (2006), 345-356.
\bibitem{BCCHT18}
 S. Botelho-Andrade, Peter G. Casazza, D. Cheng, J. Haas, and Tin T. Tran, \textit{Phase retrieval in $\ell^2(\mathbb {R})$}, (2018).
 \bibitem{ACCHTTX17}
 S. Botelho-Andrade, Peter G. Casazza, D. Cheng, J. Haas, and Tin T. Tran, J. C. Tremain, and Z. Xu, \textit{Phase retrieval by hyperplanes}, (2017).
 \bibitem{CCD16}
J. Cahill, P.G. Casazza, and I. Daubechies, \textit{Phase retrieval in infinite dimensional
Hilbert spaces}, Transactions of the AMS, Series B, 3, (2016), 63-76.
\bibitem{CCJW14}
J. Cahill, P.G. Casazza, J. Jasper, and L. M. Woodland, \textit{Phase retrieval and norm retrieval}, (2014).
\bibitem{CCPW16}
J. Cahill, P.G. Casazza, J. Peterson and L. Woodland, \textit{Phase retrivial by projections},
 Houston Journal of Mathematics 42 No. 2, (2016), 537-558.
 \bibitem{CC17}
 P. G. Casazza, D. Cheng, \textit{Associating vectors in $\mathbb{C}^{n}$  with rank 2 projections in $\mathbb{R}^{2n}$: with applications}, (2017).
 \bibitem{CGJT17}
 P. G. Casazza, D. Ghoreishi, S. Jose, J. C. Tremain,\textit{ Norm retrieval and phase Retrieval by projections}, Axioms, 6, (2017), 1-15.
  \bibitem{CK2004}
 P. G. Casazza and G. Kutyniok, \textit{Frames and subspaces}. In Wavelets,Frames, and Operator Theory.
 Contemporary Mathematics, vol. 345, pp.87-113. American Mathematical Society, Providence (2004).
 \bibitem{CKL2008}
P. G. Casazza, G. Kutyniok,S. Li, \textit{ Fusion frames and distributed processing}. Appl. Comput. Harmon. Anal. 25, 114-132, (2008).
\bibitem{CT18}
P. G. Casazza and Janet C. Tremain, \textit{Phase retrivial and norm retrieval by vectors and projections}, (2018).
 \bibitem{Ch2003}
 O. Christensen,\textit{ An introduction to frames and Riesz bases}, Birkhauser, Boston (2003).
\bibitem{D2010}
J. Drenth, \textit{Principles of protein x-ray crystallography}, Springer, (2010).
\bibitem{DS1952}
R. J. Duffin and A. C. Schaeffer. \textit{A class of nonharmonic Fourier series}, Transactions of the American Mathematical Society, 72, 341-366, (1952).
\bibitem{MAHF2015}
M. A. Hasankhani Fard, \textit{Norm retrievable frames in $\mathbb{R}^n$}, Linear Algebra
 and its Applications, (2015).
\end{thebibliography}
\end{document}